\theoremstyle{plain}
\theoremstyle{definition}
\newtheorem{theorem}{Theorem}[section]
\newtheorem{lemma}[theorem]{Lemma}
\newtheorem{convention}[theorem]{Convention}
\theoremstyle{remark}
\numberwithin{equation}{section}
\newcommand{\SP}{\: \: \: \: \:}
\title[Proximal relations and generalized shifts]{On proximal relations in transformation semigroups arising from generalized shifts}
\author[F. Ayatollah Zadeh Shirazi, A. Fallahpour, M. R. Mardanbeigi, Z. Nili Ahmadabadi]{F. Ayatollah Zadeh Shirazi \\ A. Fallahpour \\  M. R. Mardanbeigi \\ Z. Nili Ahmadabadi}
\begin{document}
\begin{abstract}
\noindent   For a finite discrete topological space $X$
with at least two elements, a nonempty set $\Gamma$, and a map $\varphi:\Gamma\to\Gamma$,
$\sigma_\varphi:X^\Gamma\to X^\Gamma$ with $\sigma_\varphi((x_\alpha)_{\alpha\in\Gamma})=
(x_{\varphi(\alpha)})_{\alpha\in\Gamma}$ (for $(x_\alpha)_{\alpha\in\Gamma}\in X^\Gamma$) is
a generalized shift. In this text for $\mathcal{S}=\{\sigma_\psi:\psi\in\Gamma^\Gamma\}$
and $\mathcal{H}=\{\sigma_\psi:
\Gamma\mathop{\rightarrow}\limits^{\psi}\Gamma$ is bijective$\}$ we study proximal relations
of transformation semigroups $(\mathcal{S},X^\Gamma)$ and $(\mathcal{H},X^\Gamma)$.
Regarding proximal relation we prove:
\[P({\mathcal S},X^\Gamma)=\{((x_\alpha)_{\alpha\in\Gamma},(y_\alpha)_{\alpha\in\Gamma})
\in X^\Gamma\times X^\Gamma:
\exists\beta\in\Gamma\:(x_\beta=y_\beta)\}\]
and $P({\mathcal H},X^\Gamma)\subseteq \{((x_\alpha)_{\alpha\in\Gamma},(y_\alpha)_{\alpha\in\Gamma})
\in X^\Gamma\times X^\Gamma:
\{\beta\in\Gamma:x_\beta=y_\beta\}$ is infinite~$\}\cup\{
(x,x):x\in \mathcal{X}\}$.
\\
Moreover, for infinite $\Gamma$, both transformation semigroups
$({\mathcal S},X^\Gamma)$ and $({\mathcal H},X^\Gamma)$ are regionally proximal,
i.e.,  $Q({\mathcal S},X^\Gamma)=Q({\mathcal H},X^\Gamma)=X^\Gamma
\times X^\Gamma$, also for sydetically proximal relation we have
$L({\mathcal H},X^\Gamma)=\{((x_\alpha)_{\alpha\in\Gamma},(y_\alpha)_{\alpha\in\Gamma})
\in X^\Gamma\times X^\Gamma:
\{\gamma\in\Gamma:x_\gamma\neq y_\gamma\}$ is finite$\}$.
\end{abstract}
\maketitle
{\small \noindent {\bf AMS Subject Classification 2010:} 54H20
\\ {Keywords:} Generalized shift, Proximal relation, Transformation semigroup.}

\section{Preliminaries}
\noindent By a {\it (left topological) transformation semigroup} $(S,Z,\pi)$
or simply $(S,Z)$ we mean a compact Hausdorff
topological space $Z$ (phase space), discrete topological semigroup $S$ 
(phase semigroup) with identity $e$ and continuous
map $\pi:S\times Z\to Z$ ($\pi(s,z)=sz, s\in S,z\in Z$) such that for all $z\in Z$
and $s,t\in S$ we have $ez=z$, $(st)z=s(tz)$. If
$S$ is a discrete topological group too, then we call the transformation semigroup $(S,Z)$,
a {\it transformation group}.
We say $(x,y)\in Z\times Z$
is a {\it proximal pair} of $(S,Z)$ if 
there exists a net $\{s_\lambda\}_{\lambda\in\Lambda}$ in $S$
with ${\displaystyle\lim_{\lambda\in\Lambda}s_\lambda x=\lim_{\lambda\in\Lambda}s_\lambda y}$.
We denote the collection of all proximal pairs of $(S,Z)$ by $P(S,Z)$ and call
it {\it proximal relation} on $(S,Z)$, for more details on proximal relations
we refer the interested reader to  \cite{brn} and \cite{glasner}. 
\\
In the transformation semigroup $(S,Z)$ we call $(x,y)\in Z\times Z$
a regionally proximal pair if there exists a net $\{(s_\lambda,x_\lambda,y_\lambda)\}_{
\lambda\in\Lambda}$ in $S\times Z\times Z$ such that
${\displaystyle\lim_{\lambda\in\Lambda}x_\lambda}=x$,
${\displaystyle\lim_{\lambda\in\Lambda}y_\lambda}=y$, and
${\displaystyle\lim_{\lambda\in\Lambda}s_\lambda x_\lambda=
    \lim_{\lambda\in\Lambda}s_\lambda y_\lambda}$.
We denote the collection of all regionally proximal pairs of $(S,Z)$ by $Q(S,Z)$ and call
it regionally proximal relation on $(S,Z)$. Obviously we have
$P(S,Z)\subseteq Q(S,Z)$. 
In the transformation group $(T,Z)$, by \cite{yu} we call $L(T,Z)=\{(x,y)\in Z\times Z:\overline{T(x,y)}
\subseteq P(T,Z)\}$ the syndetically proximal relation of $(T,Z)$
(for details on the interaction of $L(T,Z)$, $Q(T,Z)$ and $P(T,Z)$ with uniform structure of $Z$
see \cite{ellis,gerco,yu}).
\subsection*{A collection of generalized shifts as phase semigroup}
\noindent For nonempty sets $X,\Gamma$ and self-map $\varphi:\Gamma\to\Gamma$
define the generalized shift 
$\sigma_\varphi:X^\Gamma\to X^\Gamma$ by
$\sigma_\varphi((x_\alpha)_{\alpha\in\Gamma})=(x_{\varphi(\alpha)})_{\alpha\in\Gamma}$
($(x_\alpha)_{\alpha\in\Gamma}\in X^\Gamma$). Generalized shifts have been introduced
for the first time in \cite{200}, 
in addition dynamical and non-dynamical properties of generalized shifts 
have been studied in several texts like \cite{dev} and \cite{gio}. It's well-known that if $X$ 
has a topological structure, then $\sigma_\varphi:X^\Gamma\to X^\Gamma$
is continuous (when $X^\Gamma$ equipped with product topology), in addition
If $X$ has at least two elements, then $\sigma_\varphi:X^\Gamma\to X^\Gamma$
is a homeomorphism if and only if $\varphi:\Gamma\to\Gamma$ is bijective.
\begin{convention}
In this text suppose $X$ is a finite discrete topological space with at least
two elements, $\Gamma$ is a nonempty set, ${\mathcal X}:=X^\Gamma$, and:
\begin{itemize}
\item ${\mathcal S}:=\{\sigma_\varphi:\varphi\in\Gamma^\Gamma\}$,
    is the semigroup of generalized shifts on $X^\Gamma$,
\item ${\mathcal H}:=\{\sigma_\varphi:\varphi\in\Gamma^\Gamma$ and $\varphi:\Gamma\to
    \Gamma$ is bijective~$\}$, is the group of generalized shift
    homeomorphisms on $X^\Gamma$.
\end{itemize}
Equip $X^\Gamma$ with product (pointwise convergence) topology.
Now we may consider ${\mathcal S}$ (resp. ${\mathcal H}$) as a subsemigroup
(resp. subgroup) of continuous maps (resp. homeomorphisms) from
${\mathcal X}$ to itself, so ${\mathcal S}$ (resp. ${\mathcal H}$)
acts on ${\mathcal X}$ in a natural way.
\end{convention}
\noindent Our aim in this text is to study 
$P(T,{\mathcal X})$, $Q(T,{\mathcal X})$, and $L(T,{\mathcal X})$ for $T=\mathcal{H},\mathcal{S}$.
Readers interested in this subject may refer to \cite{mogh} too.
\section{Proximal and regionally proximal relations of $({\mathcal S},{\mathcal X})$}
\noindent In this section we prove that 
\[P({\mathcal S},{\mathcal X})=\{((x_\alpha)_{\alpha\in\Gamma},(y_\alpha)_{\alpha\in\Gamma})
\in {\mathcal X}\times {\mathcal X}:
\exists\beta\in\Gamma\:(x_\beta=y_\beta)\}\]
and 
\[Q(\mathcal{S},\mathcal{X})=\left\{\begin{array}{lc} \mathcal{X}\times\mathcal{X}
& \Gamma {\rm \: is \: infinite}\:, \\
P({\mathcal S},{\mathcal X}) & \Gamma {\rm \: is \: finite}\:.\end{array}\right.\]
\begin{theorem}\label{th}
$P({\mathcal S},{\mathcal X})=\{((x_\alpha)_{\alpha\in\Gamma},(y_\alpha)_{\alpha\in\Gamma})
\in {\mathcal X}\times {\mathcal X}:
\exists\beta\in\Gamma\:(x_\beta=y_\beta)\}$.
\end{theorem}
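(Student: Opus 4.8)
The plan is to establish the two inclusions separately; the containment that needs an argument is the forward one, and even that reduces quickly to the fact that $X$ carries the discrete topology.

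For the inclusion $\supseteq$, suppose $x=(x_\alpha)_{\alpha\in\Gamma}$ and $y=(y_\alpha)_{\alpha\in\Gamma}$ agree in some coordinate $\beta\in\Gamma$, i.e. $x_\beta=y_\beta$. I would take $\varphi:\Gamma\to\Gamma$ to be the constant map with value $\beta$; then $\sigma_\varphi(x)=(x_\beta)_{\alpha\in\Gamma}=(y_\beta)_{\alpha\in\Gamma}=\sigma_\varphi(y)$. Hence the constant net $\{\sigma_\varphi\}$ already witnesses $\lim\sigma_\varphi x=\lim\sigma_\varphi y$, so $(x,y)\in P(\mathcal{S},\mathcal{X})$. (This uses nothing about $X$ beyond being a nonempty set, and no compactness of $\mathcal{X}$.)

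For the inclusion $\subseteq$, I would argue by contraposition: assume $x_\beta\neq y_\beta$ for \emph{every} $\beta\in\Gamma$ and show $(x,y)\notin P(\mathcal{S},\mathcal{X})$. Suppose, toward a contradiction, that there is a net $\{\sigma_{\varphi_\lambda}\}_{\lambda\in\Lambda}$ in $\mathcal{S}$ with $\lim_\lambda\sigma_{\varphi_\lambda}x=\lim_\lambda\sigma_{\varphi_\lambda}y=z$ for some $z=(z_\alpha)_{\alpha\in\Gamma}\in\mathcal{X}$. Fix one $\alpha\in\Gamma$. Convergence in the product topology is coordinatewise, so $x_{\varphi_\lambda(\alpha)}=(\sigma_{\varphi_\lambda}x)_\alpha\to z_\alpha$ and $y_{\varphi_\lambda(\alpha)}=(\sigma_{\varphi_\lambda}y)_\alpha\to z_\alpha$ in $X$; since $X$ is discrete, each of these nets is eventually equal to $z_\alpha$, and since $\Lambda$ is directed there is a single index $\lambda$ with $x_{\varphi_\lambda(\alpha)}=z_\alpha=y_{\varphi_\lambda(\alpha)}$. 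Then $x_\beta=y_\beta$ for $\beta=\varphi_\lambda(\alpha)$, contradicting the hypothesis, and the inclusion follows.

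The one point requiring care — rather than a genuine obstacle — is the net bookkeeping in the last step: one must use the product topology to pass to a fixed coordinate, the discreteness of $X$ to upgrade convergence to eventual equality, and the directedness of $\Lambda$ to merge ``eventually $x_{\varphi_\lambda(\alpha)}=z_\alpha$'' and ``eventually $y_{\varphi_\lambda(\alpha)}=z_\alpha$'' into one common index. Finiteness of $X$ is not needed for this particular theorem, but discreteness is essential.
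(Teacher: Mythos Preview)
Your proof is correct and follows essentially the same route as the paper: the constant self-map $\varphi\equiv\beta$ for the inclusion $\supseteq$, and for $\subseteq$ fixing a coordinate, using discreteness of $X$ to turn coordinatewise convergence into eventual equality, and reading off an index $\beta=\varphi_\lambda(\alpha)$ with $x_\beta=y_\beta$. The only cosmetic difference is that you phrase the second inclusion by contraposition, whereas the paper argues it directly.
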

\begin{proof}
First consider $\beta\in \Gamma$ and $(x_\alpha)_{\alpha\in\Gamma},(y_\alpha)_{\alpha\in\Gamma}
\in {\mathcal X}$ by $x_\beta=y_\beta$. Define $\psi:\Gamma\to\Gamma$ with
$\psi(\alpha)=\beta$ for all $\alpha\in\Gamma$. Then 
\[\sigma_\psi(
(x_\alpha)_{\alpha\in\Gamma})=(x_\beta)_{\alpha\in\Gamma}=
(y_\beta)_{\alpha\in\Gamma}=\sigma_\psi(
(y_\alpha)_{\alpha\in\Gamma})\]
and
$((x_\alpha)_{\alpha\in\Gamma},(y_\alpha)_{\alpha\in\Gamma})\in
P({\mathcal S},{\mathcal X})$.
\\
Conversely, suppose $((x_\alpha)_{\alpha\in\Gamma},(y_\alpha)_{\alpha\in\Gamma})\in
P({\mathcal S},{\mathcal X})$. There exists a net
$\{\sigma_{\varphi_\lambda}\}_{\lambda\in\Lambda}$ in ${\mathcal S}$ with
${\displaystyle\lim_{\lambda\in\Lambda}\sigma_{\varphi_\lambda}(
(x_\alpha)_{\alpha\in\Gamma})=
\lim_{\lambda\in\Lambda}\sigma_{\varphi_\lambda}(
(y_\alpha)_{\alpha\in\Gamma})}=:(z_\alpha)_{\alpha\in\Gamma}$.
Choose arbitrary $\theta\in\Gamma$, then
\[{\displaystyle\lim_{\lambda\in\Lambda}x_{\varphi_\lambda(\theta)}=
\lim_{\lambda\in\Lambda}y_{\varphi_\lambda(\theta)}}=z_\theta\] 
in $X$.
Since $X$ is discrete, there exists $\lambda_0\in\Lambda$ such that
$x_{\varphi_\lambda(\theta)}=y_{\varphi_\lambda(\theta)}=z_\theta$
for all $\lambda\geq\lambda_0$, in particular for $\beta=\varphi_{\lambda_0(\theta)}$
we have $x_\beta=y_\beta$.
\end{proof}
\begin{lemma}\label{lem70}
For infinite $\Gamma$ we have:
$Q(\mathcal{S},\mathcal{X})=Q(\mathcal{H},\mathcal{X})=\mathcal{X}\times\mathcal{X}$.
\end{lemma}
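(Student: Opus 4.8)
The plan is to reduce everything to a single inclusion and then verify it by a direct net construction. Since $\mathcal{H}\subseteq\mathcal{S}$, any net in $\mathcal{H}\times\mathcal{X}\times\mathcal{X}$ witnessing regional proximality for $(\mathcal{H},\mathcal{X})$ is also such a net for $(\mathcal{S},\mathcal{X})$, so $Q(\mathcal{H},\mathcal{X})\subseteq Q(\mathcal{S},\mathcal{X})$; as $Q(\mathcal{S},\mathcal{X})\subseteq\mathcal{X}\times\mathcal{X}$ trivially, it suffices to prove $\mathcal{X}\times\mathcal{X}\subseteq Q(\mathcal{H},\mathcal{X})$, whereupon all three sets coincide.

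To that end, fix $x=(x_\alpha)_{\alpha\in\Gamma},\,y=(y_\alpha)_{\alpha\in\Gamma}\in\mathcal{X}$ and some $p\in X$, and index a net by the directed set of finite subsets $F$ of $\Gamma$ ordered by inclusion. For each $F$, the hypothesis that $\Gamma$ is infinite lets me pick $F'\subseteq\Gamma\setminus F$ with $|F'|=|F|$ together with a bijection $F\to F'$; let $\psi_F\colon\Gamma\to\Gamma$ be the involution realizing this bijection on $F\cup F'$ and fixing every other point, so $\psi_F$ is bijective (hence $\sigma_{\psi_F}\in\mathcal{H}$) and $\psi_F(F)=F'$ is disjoint from $F$. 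Let $u_F,v_F\in\mathcal{X}$ be the well-defined points that agree with $x$, resp. with $y$, on $F$ and are constantly $p$ off $F$.

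First I would check the easy limits: $u_F\to x$ and $v_F\to y$ in the product topology, since for a fixed finite $F_0$ the $F_0$-coordinates of $u_F$ and $v_F$ are correct once $F\supseteq F_0$. Next, for $\gamma\in F$ one has $(\sigma_{\psi_F}u_F)_\gamma=u_F(\psi_F(\gamma))=p=v_F(\psi_F(\gamma))=(\sigma_{\psi_F}v_F)_\gamma$, using $\psi_F(\gamma)\in F'\subseteq\Gamma\setminus F$; thus $\sigma_{\psi_F}u_F$ and $\sigma_{\psi_F}v_F$ agree on $F$, so $(\sigma_{\psi_F}u_F,\sigma_{\psi_F}v_F)$ converges to the diagonal of $\mathcal{X}\times\mathcal{X}$. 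Finally, by compactness of $\mathcal{X}$ I would pass to a subnet along which $\sigma_{\psi_F}u_F$ converges to some $z$; cofinality of the subnet guarantees that for each finite $F_0$ one eventually has $F\supseteq F_0$ as well as agreement of $\sigma_{\psi_F}u_F$ with $z$ on $F_0$, whence $\sigma_{\psi_F}v_F$ also agrees with $z$ on $F_0$, so $\sigma_{\psi_F}v_F\to z$ too. The triples $(\sigma_{\psi_F},u_F,v_F)$ along this subnet then exhibit $(x,y)$ as a regionally proximal pair of $(\mathcal{H},\mathcal{X})$.

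Most of this is routine bookkeeping; the one genuinely essential use of infiniteness of $\Gamma$ — and what I see as the heart of the matter — is the existence of the bijection $\psi_F$ with $\psi_F(F)\cap F=\emptyset$, which is exactly what permits $u_F,v_F$ to be forced to disagree throughout $F$ while their $\sigma_{\psi_F}$-images are forced to agree throughout $F$. One could instead observe that, for infinite $\Gamma$, $P(\mathcal{S},\mathcal{X})$ from Theorem~\ref{th} is dense in $\mathcal{X}\times\mathcal{X}$ and invoke $P\subseteq Q$ together with the standard fact that the regionally proximal relation is closed to get $Q(\mathcal{S},\mathcal{X})=\mathcal{X}\times\mathcal{X}$ at once; but a construction like the one above still seems needed for $\mathcal{H}$, so I would present the uniform argument.
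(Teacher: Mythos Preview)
Your proof is correct. The paper's argument is structurally different: it fixes once and for all a bijection $\mu:\Gamma\times\mathbb{Z}\to\Gamma$ (using $|\Gamma|=\aleph_0\cdot|\Gamma|$), defines a single bijection $\varphi$ acting as the shift $n\mapsto n+1$ on the $\mathbb{Z}$-factor, takes approximants $x^n,y^n$ agreeing with $x,y$ on $\{\mu(\beta,k):k\le n\}$ and equal to $p$ elsewhere, and then uses the \emph{sequence} $\sigma_{\varphi^{2n}}$ to push coordinates into the constant-$p$ region, obtaining $\sigma_{\varphi^{2n}}x^n\to(p)_{\alpha\in\Gamma}\leftarrow\sigma_{\varphi^{2n}}y^n$. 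Your version trades this global partition and fixed shift for ad hoc involutions $\psi_F$ indexed by finite $F\subseteq\Gamma$; this is more elementary (no cardinal arithmetic, only the trivial fact that $\Gamma\setminus F$ has at least $|F|$ elements) at the cost of producing a genuine net rather than a sequence. One small simplification you missed: the compactness/subnet step is unnecessary, since for any fixed $\gamma$, once $F\ni\gamma$ you have $\psi_F(\gamma)\in F'\subseteq\Gamma\setminus F$ and hence $(\sigma_{\psi_F}u_F)_\gamma=p=(\sigma_{\psi_F}v_F)_\gamma$; thus $\sigma_{\psi_F}u_F$ and $\sigma_{\psi_F}v_F$ both converge directly to the constant point $(p)_{\alpha\in\Gamma}$, exactly as in the paper's proof.
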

\begin{proof}
Suppose $\Gamma$ is infinite, then there exits a bijection $\mu:\Gamma\times{\mathbb Z}\to\Gamma$, in particular $\{\mu(\{\alpha\}\times{\mathbb Z}):\alpha\in\Gamma\}$
is a partition of $\Gamma$ to its infinite countable subsets. Define bijection
$\varphi:\Gamma\to\Gamma$
by $\varphi(\mu(\alpha,n))=\mu(\alpha,n+1)$ for all $\alpha\in\Gamma$ and
$n\in\mathbb Z$. Consider $p\in X$ and
$(x_\alpha)_{\alpha\in\Gamma},(y_\alpha)_{\alpha\in\Gamma}
\in\mathcal{X}$. For all $n\geq1$  and $\alpha\in\Gamma$ let:
\[x_\alpha^n:=\left\{\begin{array}{lc} x_\alpha & \alpha=\mu(\beta,k)
	{\rm \: for \: some \:}\beta\in\Gamma{\rm \: and \:}k\leq n\:,\\
	p & {\rm otherwise}\:,\end{array}\right.\]
and
	\[y_\alpha^n:=\left\{\begin{array}{lc} y_\alpha & \alpha=\mu(\beta,k)
	{\rm \: for \: some \:}\beta\in\Gamma{\rm \: and \:}k\leq n\:,\\
	p & {\rm otherwise}\:,\end{array}\right.\]
then:
\[{\displaystyle\lim_{n\to+\infty}(x^n_\alpha)_{\alpha\in\Gamma}}=(x_\alpha)_{\alpha\in\Gamma}\:,\]
\[{\displaystyle\lim_{n\to+\infty}(y^n_\alpha)_{\alpha\in\Gamma}}=(y_\alpha)_{\alpha\in\Gamma}\:,\]
\[{\displaystyle\lim_{n\to\infty}\sigma_{\varphi^{2n}}((x^n_\alpha)_{\alpha\in\Gamma})}=
(p_\alpha)_{\alpha\in\Gamma}=
{\displaystyle\lim_{n\to+\infty}\sigma_{\varphi^{2n}}((y^n_\alpha)_{\alpha\in\Gamma})}\:.\]
By $\sigma_{\varphi^{2n}}\in\mathcal{H}$ for all $n\geq1$ and using the
above statements, we have $((x_\alpha)_{\alpha\in\Gamma},(y_\alpha)_{\alpha\in\Gamma})\in
Q(\mathcal{H},\mathcal{X})\subseteq Q(\mathcal{S},\mathcal{X})$.
\end{proof}
\begin{lemma}\label{lem80}
For finite $\Gamma$ and any subsemigroup $\mathcal T$ of $\mathcal S$ we have 
$Q(\mathcal{T},\mathcal{X})=P(\mathcal{T},\mathcal{X})$.
\end{lemma}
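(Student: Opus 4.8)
The plan is to exploit the fact that finiteness of both $X$ and $\Gamma$ forces $\mathcal{X}=X^\Gamma$ to be a \emph{finite} set, so that the product topology on $\mathcal{X}$ is the discrete topology. Once this is noted, the regionally proximal relation cannot be strictly larger than the proximal relation, because in a discrete Hausdorff space every convergent net is eventually constant, which removes all the slack that distinguishes $Q$ from $P$.

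Concretely, I would proceed as follows. The inclusion $P(\mathcal{T},\mathcal{X})\subseteq Q(\mathcal{T},\mathcal{X})$ is automatic and is already recorded in the Preliminaries, so only the reverse inclusion needs an argument. Take $(x,y)\in Q(\mathcal{T},\mathcal{X})$ and fix a net $\{(s_\lambda,x_\lambda,y_\lambda)\}_{\lambda\in\Lambda}$ in $\mathcal{T}\times\mathcal{X}\times\mathcal{X}$ witnessing regional proximality, i.e.\ $x_\lambda\to x$, $y_\lambda\to y$, and $s_\lambda x_\lambda\to z$, $s_\lambda y_\lambda\to z$ for a common $z\in\mathcal{X}$. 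Since $\mathcal{X}$ is finite and discrete, each of these four convergent nets is eventually constant and equal to its limit; choosing $\lambda_0\in\Lambda$ beyond the stabilization point of all four, we get $x_{\lambda_0}=x$, $y_{\lambda_0}=y$, and $s_{\lambda_0}x_{\lambda_0}=z=s_{\lambda_0}y_{\lambda_0}$.

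Combining these equalities yields $s_{\lambda_0}x=s_{\lambda_0}y$, so the constant net with value $s_{\lambda_0}$ witnesses $(x,y)\in P(\mathcal{T},\mathcal{X})$. Hence $Q(\mathcal{T},\mathcal{X})\subseteq P(\mathcal{T},\mathcal{X})$, and together with the trivial inclusion this gives equality. I do not anticipate any real obstacle here; the only point requiring a little care is the passage from "eventually constant" for each of the finitely many nets to a single index $\lambda_0$ that works simultaneously, which is handled by the directedness of $\Lambda$. It is worth remarking explicitly in the write-up that this lemma, combined with Theorem~\ref{th} and Lemma~\ref{lem70}, completes the description of $Q(\mathcal{S},\mathcal{X})$ announced at the start of the section.
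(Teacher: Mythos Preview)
Your proof is correct and is essentially the paper's argument: both exploit that $\mathcal{X}$ (and hence everything in sight) is finite when $\Gamma$ is finite, so the witnessing net for regional proximality degenerates to one for proximality. The only cosmetic difference is that the paper passes to a constant subnet of $\{(x_\lambda,y_\lambda,t_\lambda)\}$ in the finite set $\mathcal{X}\times\mathcal{X}\times\mathcal{T}$, whereas you use eventual constancy of the four $\mathcal{X}$-valued nets separately and then combine via directedness of $\Lambda$.
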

\begin{proof}
We must only prove $Q(\mathcal{T},\mathcal{X})\subseteq P(\mathcal{T},\mathcal{X})$.
Suppose $(x,y)\in Q(\mathcal{T},\mathcal{X})$, then there exists a net $\{(x_\lambda,y_\lambda,
t_\lambda)\}_{\lambda\in\Lambda}$ in $\mathcal{X}\times\mathcal{X}\times\mathcal{T}$ such that
${\displaystyle\lim_{\lambda\in\Lambda}x_\lambda}=x$, 
${\displaystyle\lim_{\lambda\in\Lambda}y_\lambda}=y$,
and ${\displaystyle\lim_{\lambda\in\Lambda}t_\lambda x_\lambda}=
{\displaystyle\lim_{\lambda\in\Lambda}t_\lambda y_\lambda}=:z$. Since 
$\mathcal{X}\times\mathcal{X}\times\mathcal{T}$ is finite, $\{(x_\lambda,y_\lambda,
t_\lambda)\}_{\lambda\in\Lambda}$ has a constant subnet like $\{(x_{\lambda_\mu},y_{\lambda_\mu},
t_{\lambda_\mu})\}_{\mu\in M}$, so there exists $t\in\mathcal T$ such that for all $\mu\in M$ we have 
$x=x_{\lambda_\mu}$, $y=y_{\lambda_\mu}$ and $t=t_{\lambda_\mu}$, therefore
$tx=ty(=z)$ and $(x,y)\in P(\mathcal{T},\mathcal{X})$.
\end{proof}
\begin{theorem}
We have:
\[Q(\mathcal{S},\mathcal{X})=\left\{\begin{array}{lc} \mathcal{X}\times\mathcal{X}
& \Gamma {\rm \: is \: infinite}\:, \\
P({\mathcal S},{\mathcal X}) & \Gamma {\rm \: is \: finite}\:.\end{array}\right.\]
\end{theorem}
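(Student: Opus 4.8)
The plan is to observe that this statement is an immediate corollary of Lemmas \ref{lem70} and \ref{lem80}, so the only work is to split into the two cases according to whether $\Gamma$ is infinite or finite and invoke the appropriate lemma in each case.

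First I would treat the case where $\Gamma$ is infinite. Here Lemma \ref{lem70} gives directly $Q(\mathcal{S},\mathcal{X})=\mathcal{X}\times\mathcal{X}$, which is exactly the claimed value, so nothing further is needed. Next I would treat the case where $\Gamma$ is finite. Applying Lemma \ref{lem80} with the subsemigroup $\mathcal{T}=\mathcal{S}$ of $\mathcal{S}$ yields $Q(\mathcal{S},\mathcal{X})=P(\mathcal{S},\mathcal{X})$, which is again the claimed value. Combining the two cases establishes the formula. I do not anticipate any obstacle: the substantive content has already been isolated into the two preceding lemmas (the construction of the shifting bijection $\varphi$ on a $\mathbb{Z}$-indexed partition of $\Gamma$ for the infinite case, and the constant-subnet argument on the finite space $\mathcal{X}\times\mathcal{X}\times\mathcal{T}$ for the finite case), so this proof is purely a matter of case analysis and citation.

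\begin{proof}
If $\Gamma$ is infinite, then $Q(\mathcal{S},\mathcal{X})=\mathcal{X}\times\mathcal{X}$ by Lemma \ref{lem70}. If $\Gamma$ is finite, then applying Lemma \ref{lem80} with $\mathcal{T}=\mathcal{S}$ gives $Q(\mathcal{S},\mathcal{X})=P(\mathcal{S},\mathcal{X})$. This proves the desired formula.
\end{proof}
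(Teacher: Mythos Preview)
Your proposal is correct and matches the paper's own proof essentially verbatim: the paper simply writes ``Use Lemmas~\ref{lem70} and~\ref{lem80},'' which is exactly the case split and citation you give.
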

\begin{proof}
Use Lemmas~\ref{lem70} and~\ref{lem80}.
\end{proof}
\section{Proximal and regionally proximal relations of $({\mathcal H},{\mathcal X})$}
\noindent Note that for finite $\Gamma$, $\mathcal{H}$ is a
finite subset of homeomorphisms on $\mathcal{X}$
and $P({\mathcal H},{\mathcal X})=\{(x,x):x\in\mathcal{X}\}$,
also using Lemmas~\ref{lem70} and~\ref{lem80} we have:
\[Q(\mathcal{H},\mathcal{X})=\left\{\begin{array}{lc} \mathcal{X}\times\mathcal{X}
& \Gamma {\rm \: is \: infinite}\:, \\
P({\mathcal H},{\mathcal X})=\{(x,x):x\in\mathcal{X}\} & 
\Gamma {\rm \: is \: finite}\:.\end{array}\right.\]
In this section we show that:
{\small
\[\{((x_\alpha)_{\alpha\in\Gamma},(y_\alpha)_{\alpha\in\Gamma}):
\max({\rm card}(\{\beta\in\Gamma:x_\beta\neq y_\beta\}),\aleph_0)\leq\:{\rm card}(\{\beta\in\Gamma:x_\beta=y_\beta\})\}\]}
is a subset of $P(\mathcal{H},\mathcal{X})$, which is a subset of
\[\{((x_\alpha)_{\alpha\in\Gamma},(y_\alpha)_{\alpha\in\Gamma})
\in {\mathcal X}\times {\mathcal X}:
\{\beta\in\Gamma:x_\beta=y_\beta\}{\rm \: is \: infinite}\}\cup\{
(x,x):x\in \mathcal{X}\}\]
in its turn.
In particular, for countable $\Gamma$ we prove
\[P({\mathcal H},{\mathcal X})=
\{((x_\alpha)_{\alpha\in\Gamma},(y_\alpha)_{\alpha\in\Gamma})
\in {\mathcal X}\times {\mathcal X}:
\{\beta\in\Gamma:x_\beta=y_\beta\}{\rm \: is \: infinite}\}\cup 
\{(x,x):x\in\mathcal{X}\}\:.\]
\begin{lemma}\label{lem10}
For infinite $\Gamma$, we have:
\begin{center}
$P({\mathcal H},{\mathcal X})\subseteq
\{((x_\alpha)_{\alpha\in\Gamma},(y_\alpha)_{\alpha\in\Gamma})
\in {\mathcal X}\times {\mathcal X}:
\{\beta\in\Gamma:x_\beta=y_\beta\}$ is infinite~$\}$.
\end{center}
\end{lemma}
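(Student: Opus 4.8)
The plan is to reuse the coordinatewise discreteness argument from the proof of Theorem~\ref{th}, and then add the one extra ingredient that is genuinely new here: injectivity of the underlying maps. Suppose $((x_\alpha)_{\alpha\in\Gamma},(y_\alpha)_{\alpha\in\Gamma})\in P(\mathcal{H},\mathcal{X})$ and fix a net $\{\sigma_{\varphi_\lambda}\}_{\lambda\in\Lambda}$ in $\mathcal{H}$ with
\[\lim_{\lambda\in\Lambda}\sigma_{\varphi_\lambda}((x_\alpha)_{\alpha\in\Gamma})=\lim_{\lambda\in\Lambda}\sigma_{\varphi_\lambda}((y_\alpha)_{\alpha\in\Gamma})=:(z_\alpha)_{\alpha\in\Gamma},\]
where each $\varphi_\lambda:\Gamma\to\Gamma$ is a bijection. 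If $(x_\alpha)_{\alpha\in\Gamma}=(y_\alpha)_{\alpha\in\Gamma}$ the pair lies in the right-hand side trivially (indeed in $\{(x,x):x\in\mathcal X\}$), so we may assume the two points are distinct; in fact the argument below never uses that assumption.

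First I would evaluate at each coordinate: for every $\theta\in\Gamma$ we get $\lim_{\lambda}x_{\varphi_\lambda(\theta)}=\lim_{\lambda}y_{\varphi_\lambda(\theta)}=z_\theta$ in the discrete space $X$, hence there is $\lambda_\theta\in\Lambda$ with $x_{\varphi_\lambda(\theta)}=y_{\varphi_\lambda(\theta)}=z_\theta$ for all $\lambda\geq\lambda_\theta$. This is precisely the step already carried out in Theorem~\ref{th}, now applied separately at each $\theta$.

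Next I would fix an arbitrary $n\in\mathbb N$ and, using that $\Gamma$ is infinite, pick pairwise distinct $\theta_1,\dots,\theta_n\in\Gamma$. By directedness of $\Lambda$ choose $\lambda^\ast\in\Lambda$ with $\lambda^\ast\geq\lambda_{\theta_i}$ for every $i$. Then $x_{\varphi_{\lambda^\ast}(\theta_i)}=y_{\varphi_{\lambda^\ast}(\theta_i)}$ for $i=1,\dots,n$, and since $\varphi_{\lambda^\ast}$ is injective the elements $\varphi_{\lambda^\ast}(\theta_1),\dots,\varphi_{\lambda^\ast}(\theta_n)$ are $n$ distinct members of the set $\{\beta\in\Gamma:x_\beta=y_\beta\}$. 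As $n$ was arbitrary, this set is infinite, which is exactly the asserted containment.

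There is no real obstacle: the only subtlety is that the conclusion rests entirely on using bijectivity (really just injectivity) of each $\varphi_{\lambda^\ast}$ — this is the sole place where membership in $\mathcal H$ rather than merely in $\mathcal S$ is invoked, and it is what makes the statement strictly stronger than the corresponding half of Theorem~\ref{th}. No uniformity in $\lambda$ beyond directedness of $\Lambda$ is needed, and finiteness of $X$ is used only through discreteness, exactly as in Theorem~\ref{th}.
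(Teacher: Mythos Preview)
Your argument is correct and is essentially identical to the paper's own proof: pick finitely many distinct coordinates, use discreteness of $X$ to get eventual coordinatewise agreement, pass to a common index by directedness, and then invoke injectivity of $\varphi_{\lambda^\ast}$ to obtain $n$ distinct agreement indices. The only differences are expository (you first fix $\lambda_\theta$ for every $\theta$ rather than just for the chosen $\theta_1,\dots,\theta_n$, and you add the harmless aside about the diagonal case), not mathematical.
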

\begin{proof}
Consider $((x_\alpha)_{\alpha\in\Gamma},(y_\alpha)_{\alpha\in\Gamma})\in
P({\mathcal H},{\mathcal X})$, then there exists a net
$\{\sigma_{\varphi_\lambda}\}_{\lambda\in\Lambda}$ in ${\mathcal H}$ with
${\displaystyle\lim_{\lambda\in\Lambda}\sigma_{\varphi_\lambda}(
(x_\alpha)_{\alpha\in\Gamma})=
\lim_{\lambda\in\Lambda}\sigma_{\varphi_\lambda}(
(y_\alpha)_{\alpha\in\Gamma})}=:(z_\alpha)_{\alpha\in\Gamma}$. Choose distinct
\linebreak
$\theta_1,\ldots,\theta_n\in\Gamma$. For all $i\in\{1,\ldots,n\}$ we have
${\displaystyle\lim_{\lambda\in\Lambda}x_{\varphi_\lambda(\theta_i)}=
\lim_{\lambda\in\Lambda}y_{\varphi_\lambda(\theta_i)}}=z_{\theta_i}$ in $X$,
so there exists $\lambda_1,\ldots,\lambda_n\in\Lambda$ with
$x_{\varphi_\lambda(\theta_i)}=y_{\varphi_\lambda(\theta_i)}=z_{\theta_i}$
for all $\lambda\geq\lambda_i$. There exists $\mu\in\Lambda$ with
$\mu\geq\lambda_1,\ldots,\lambda_n$, thus
$x_{\varphi_\mu(\theta_i)}=y_{\varphi_\mu(\theta_i)}$ for $i=1,\ldots,n$. Since
$\varphi_\mu:\Gamma\to\Gamma$ is bijective and $\theta_1,\ldots,\theta_n$ are
pairwise distinct, $\{\varphi_\mu(\theta_1),\ldots,\varphi_\mu(\theta_n)\}$
has exactly $n$ elements and
$\{\varphi_\mu(\theta_1),\ldots,\varphi_\mu(\theta_n)\}
\subseteq\{\beta\in\Gamma:x_\beta=y_\beta\}$. Hence
$\{\beta\in\Gamma:x_\beta=y_\beta\}$ has at least $n$ elements
(for all $n\geq1$) and it is infinite.
\end{proof}
\begin{theorem}
$P({\mathcal H},{\mathcal X})\subseteq
\{((x_\alpha)_{\alpha\in\Gamma},(y_\alpha)_{\alpha\in\Gamma})
\in {\mathcal X}\times {\mathcal X}:
\{\beta\in\Gamma:x_\beta=y_\beta\}$ is infinite~$\}\cup\{
(x,x):x\in \mathcal{X}\}$.
\end{theorem}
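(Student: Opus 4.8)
The plan is to dispose of the two cases $\Gamma$ infinite and $\Gamma$ finite separately, in each case reducing to something already at hand.

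For infinite $\Gamma$ there is nothing new to prove: Lemma~\ref{lem10} already states that $P(\mathcal H,\mathcal X)$ is contained in the set of pairs $((x_\alpha)_{\alpha\in\Gamma},(y_\alpha)_{\alpha\in\Gamma})$ with $\{\beta\in\Gamma:x_\beta=y_\beta\}$ infinite, and that set is a subset of the right-hand side of the theorem. So I would simply cite Lemma~\ref{lem10} here.

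For finite $\Gamma$, the space $\mathcal X=X^\Gamma$ and the group $\mathcal H\subseteq\Gamma^\Gamma$ are both finite, hence discrete. Given a proximal pair $(x,y)$ of $(\mathcal H,\mathcal X)$, take a net $\{\sigma_{\varphi_\lambda}\}_{\lambda\in\Lambda}$ in $\mathcal H$ with $\lim_\lambda\sigma_{\varphi_\lambda}x=\lim_\lambda\sigma_{\varphi_\lambda}y$; since $\mathcal H$ is finite this net has a constant subnet with value, say, $\sigma_\varphi$, and then $\sigma_\varphi x=\sigma_\varphi y$. As $\varphi$ is bijective, $\sigma_\varphi$ is a homeomorphism of $\mathcal X$, in particular injective, so $x=y$. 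Thus $P(\mathcal H,\mathcal X)=\{(x,x):x\in\mathcal X\}$, which lies inside the right-hand side. This is exactly the observation recorded at the start of this section, so in the write-up I would just invoke it rather than repeat the argument. Combining the two cases gives the claimed inclusion.

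I do not expect a genuine obstacle here: all the content is carried by Lemma~\ref{lem10}, and the finite case is the elementary ``finite group of homeomorphisms has diagonal proximal relation'' fact already noted. The only point worth flagging in the exposition is \emph{why} the target set is phrased as a union with the diagonal $\{(x,x):x\in\mathcal X\}$: that summand is needed precisely to cover the finite-$\Gamma$ case, where $\{\beta\in\Gamma:x_\beta=y_\beta\}$ is finite (indeed $\subseteq\Gamma$ finite) even for the surviving pairs, whereas for infinite $\Gamma$ every diagonal pair already satisfies $\{\beta\in\Gamma:x_\beta=y_\beta\}=\Gamma$ infinite, so the extra term is redundant but harmless there.
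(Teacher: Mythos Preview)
Your proposal is correct and matches the paper's own proof essentially verbatim: the paper also splits into the two cases, invokes Lemma~\ref{lem10} for infinite $\Gamma$, and for finite $\Gamma$ uses the fact that $\mathcal{H}$ is a finite group of homeomorphisms to conclude $P(\mathcal{H},\mathcal{X})=\{(w,w):w\in\mathcal{X}\}$. Your extra remark explaining why the diagonal summand is needed is a nice expository addition, but the mathematical content is identical.
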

\begin{proof}
Use Lemma~\ref{lem10} and the fact that for finite $\Gamma$, $\mathcal{H}$
is a finite subset of homeomorphisms on $\mathcal{X}$. So  for finite $\Gamma$ we have
$P(\mathcal{H},\mathcal{X})=\{(w,w):w\in\mathcal{X}\}$.
\end{proof}
\begin{lemma}\label{lem20}
For infinite countable $\Gamma$,
$P({\mathcal H},{\mathcal X})=
\big\{((x_\alpha)_{\alpha\in\Gamma},(y_\alpha)_{\alpha\in\Gamma})
\in {\mathcal X}\times {\mathcal X}:
\{\beta\in\Gamma:x_\beta=y_\beta\}$ is infinite~$\big\}$.
\end{lemma}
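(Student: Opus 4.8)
The plan is to prove the nontrivial inclusion $\supseteq$, since $\subseteq$ is precisely Lemma~\ref{lem10}. So I fix a pair $((x_\alpha)_{\alpha\in\Gamma},(y_\alpha)_{\alpha\in\Gamma})\in\mathcal{X}\times\mathcal{X}$ for which $A:=\{\beta\in\Gamma:x_\beta=y_\beta\}$ is infinite, and I will exhibit a sequence $\{\sigma_{\varphi_n}\}_{n\geq1}$ in $\mathcal{H}$ with $\lim_{n\to+\infty}\sigma_{\varphi_n}((x_\alpha)_{\alpha\in\Gamma})=\lim_{n\to+\infty}\sigma_{\varphi_n}((y_\alpha)_{\alpha\in\Gamma})$; taking $\mathbb{N}$ with its usual order as the directed index set, such a sequence is a net and witnesses that the pair lies in $P(\mathcal{H},\mathcal{X})$.

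Since $\Gamma$ is countably infinite and $A\subseteq\Gamma$ is infinite, $A$ is itself countably infinite. Enumerate, without repetitions, $\Gamma=\{\gamma_1,\gamma_2,\ldots\}$ and $A=\{a_1,a_2,\ldots\}$. For each $n\geq1$ I build a bijection $\varphi_n:\Gamma\to\Gamma$ with $\varphi_n(\gamma_i)=a_i$ for $1\leq i\leq n$, as follows: the assignment $\gamma_i\mapsto a_i$ ($1\leq i\leq n$) is a bijection of the $n$-element set $\{\gamma_1,\ldots,\gamma_n\}$ onto the $n$-element set $\{a_1,\ldots,a_n\}$, while the complements $\Gamma\setminus\{\gamma_1,\ldots,\gamma_n\}$ and $\Gamma\setminus\{a_1,\ldots,a_n\}$ are both countably infinite and hence admit a bijection; gluing the two partial bijections produces $\varphi_n$. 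Then $\varphi_n\in\Gamma^\Gamma$ is bijective, so $\sigma_{\varphi_n}\in\mathcal{H}$ for every $n$.

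To finish, fix an index $i$. For every $n\geq i$ the $\gamma_i$-th coordinate of $\sigma_{\varphi_n}((x_\alpha)_{\alpha\in\Gamma})$ equals $x_{\varphi_n(\gamma_i)}=x_{a_i}$, and likewise the $\gamma_i$-th coordinate of $\sigma_{\varphi_n}((y_\alpha)_{\alpha\in\Gamma})$ equals $y_{\varphi_n(\gamma_i)}=y_{a_i}$; since $a_i\in A$ these agree. Because the topology on $\mathcal{X}=X^\Gamma$ is that of pointwise convergence and $X$ is discrete, this eventual constancy of each coordinate shows that both sequences $\{\sigma_{\varphi_n}((x_\alpha)_{\alpha\in\Gamma})\}_{n\geq1}$ and $\{\sigma_{\varphi_n}((y_\alpha)_{\alpha\in\Gamma})\}_{n\geq1}$ converge, to the common point $(z_\alpha)_{\alpha\in\Gamma}$ determined by $z_{\gamma_i}:=x_{a_i}$. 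Hence the two limits coincide and $((x_\alpha)_{\alpha\in\Gamma},(y_\alpha)_{\alpha\in\Gamma})\in P(\mathcal{H},\mathcal{X})$, completing the reverse inclusion and the proof.

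I do not expect a serious obstacle; the entire content of the argument is the observation that arbitrarily large finite blocks of coordinates can be steered simultaneously into the agreement set $A$ by a bijection of $\Gamma$. This is exactly where the hypotheses are used: $A$ must be infinite for the partial assignment to leave room, and the countability of $\Gamma$ is what makes the ``extend to a bijection of the complements'' step automatic. For uncountable $\Gamma$ this last step can fail when $A$ is merely countably infinite, which is consistent with only an inclusion (not an equality) being claimed in that generality.
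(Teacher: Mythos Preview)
Your proof is correct and follows essentially the same approach as the paper: enumerate $\Gamma$ and the agreement set, build bijections $\varphi_n$ sending the first $n$ indices of $\Gamma$ into the agreement set, and observe that each coordinate of the two image sequences eventually coincides. Your version adds a bit more detail (the explicit construction of $\varphi_n$ via complements, and the identification of the common limit), but the argument is the same.
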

\begin{proof}
Using Lemma~\ref{lem10} we must only prove:
\begin{center}
$P({\mathcal H},{\mathcal X})\supseteq
\{((x_\alpha)_{\alpha\in\Gamma},(y_\alpha)_{\alpha\in\Gamma})
\in {\mathcal X}\times {\mathcal X}:
\{\beta\in\Gamma:x_\beta=y_\beta\}$ is infinite~$\}$. 
\end{center}
Consider
$(x_\alpha)_{\alpha\in\Gamma},(y_\alpha)_{\alpha\in\Gamma}\in{\mathcal X}$
with infinite set  $\{\beta\in\Gamma:x_\beta=y_\beta\}=\{\beta_1,\beta_2,\ldots\}$
and distinct $\beta_i$s. Also suppose $\Gamma=\{\alpha_1,\alpha_2,\ldots\}$
with distinct $\alpha_i$s. For all $n\geq1$ there exists bijection
$\varphi_n:\Gamma\to\Gamma$ with $\varphi_n(\alpha_i)=\beta_i$ for $i\in\{1,\ldots,n\}$.
Let $\alpha\in\Gamma$, there exists $i\geq1$ with $\alpha=\alpha_i$.
Since for all $n\geq i$ we have
$x_{\varphi_n(\alpha)}=x_{\varphi_n(\alpha_i)}=x_{\beta_i}=y_{\beta_i}
=y_{\varphi_n(\alpha_i)}=y_{\varphi_n(\alpha)}$, we have
${\displaystyle\lim_{n\to\infty}x_{\varphi_n(\alpha)}=\lim_{n\to\infty}y_{\varphi_n(\alpha)}}$.
Therefore
\[{\displaystyle\lim_{n\to\infty}\sigma_{\varphi_n}((x_\alpha)_{\alpha\in\Gamma})
=\lim_{n\to\infty}(x_{\varphi_n(\alpha)})_{\alpha\in\Gamma}=
\lim_{n\to\infty}(y_{\varphi_n(\alpha)})_{\alpha\in\Gamma}=
\lim_{n\to\infty}\sigma_{\varphi_n}((y_\alpha)_{\alpha\in\Gamma})}\:,\]
and $((x_\alpha)_{\alpha\in\Gamma},(y_\alpha)_{\alpha\in\Gamma})\in
P({\mathcal H},{\mathcal X})$.
\end{proof}
\begin{theorem}\label{th10}
For countable $\Gamma$,
\begin{center}
$P({\mathcal H},{\mathcal X})=
\{((x_\alpha)_{\alpha\in\Gamma},(y_\alpha)_{\alpha\in\Gamma})
\in {\mathcal X}\times {\mathcal X}:
\{\beta\in\Gamma:x_\beta=y_\beta\}$ is infinite~$\}\cup 
\{(x,x):x\in\mathcal{X}\}$.
\end{center}
\end{theorem}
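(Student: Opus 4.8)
The plan is to split on whether the countable set $\Gamma$ is finite or countably infinite, since the two cases behave differently and the substantive work has already been isolated in Lemma~\ref{lem20}.

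First suppose $\Gamma$ is finite. Then $\mathcal{X}=X^\Gamma$ is a finite discrete space and $\mathcal{H}$ is a finite group of homeomorphisms of $\mathcal{X}$. I would note that any convergent net in a finite discrete space is eventually constant, so a proximal pair $(x,y)$ must satisfy $\sigma_\varphi x=\sigma_\varphi y$ for some bijective $\varphi:\Gamma\to\Gamma$; since $\sigma_\varphi$ is then a homeomorphism (hence injective), this forces $x=y$. Thus $P(\mathcal{H},\mathcal{X})=\{(x,x):x\in\mathcal{X}\}$, exactly as already observed at the start of this section. On the other side of the claimed identity, when $\Gamma$ is finite the set $\{\beta\in\Gamma:x_\beta=y_\beta\}$ can never be infinite, so the first set in the union is empty and the right-hand side collapses to the diagonal $\{(x,x):x\in\mathcal{X}\}$ as well. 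Hence the two sides agree.

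Next suppose $\Gamma$ is countably infinite. Here Lemma~\ref{lem20} already gives
\[
P(\mathcal{H},\mathcal{X})=\{((x_\alpha)_{\alpha\in\Gamma},(y_\alpha)_{\alpha\in\Gamma})\in\mathcal{X}\times\mathcal{X}:\{\beta\in\Gamma:x_\beta=y_\beta\}\text{ is infinite}\},
\]
so the only thing left to check is that the extra term $\{(x,x):x\in\mathcal{X}\}$ appearing in the theorem is redundant: for any $x\in\mathcal{X}$ the set $\{\beta\in\Gamma:x_\beta=x_\beta\}=\Gamma$ is infinite, so the diagonal is already contained in the first set. Therefore the right-hand side of the theorem coincides with the right-hand side of Lemma~\ref{lem20}, and the stated equality follows.

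I expect essentially no obstacle: the genuine content — producing, for a pair agreeing on an infinite set of coordinates, an explicit sequence of generalized-shift homeomorphisms witnessing proximality — is precisely Lemma~\ref{lem20}, while the present theorem only repackages that with the trivial finite case and absorbs the diagonal into the "infinite agreement set" description. The one point worth stating explicitly is the degenerate reading of ``countable'' as possibly finite, which is exactly why the case split is the first move.
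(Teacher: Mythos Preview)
Your proof is correct and follows essentially the same approach as the paper: split into the finite and countably infinite cases, dispatch the finite case by observing that $\mathcal{H}$ consists of finitely many homeomorphisms so the proximal relation is the diagonal, and invoke Lemma~\ref{lem20} for the infinite case. You supply more detail than the paper (which simply asserts the finite case and cites Lemma~\ref{lem20}), in particular spelling out why the diagonal is absorbed into the ``infinite agreement set'' description when $\Gamma$ is infinite, but the underlying argument is the same.
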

\begin{proof}
First note that for finite $\Gamma$, $\mathcal{H}$
is finite and $P({\mathcal H},{\mathcal X})=\{(x,x):x\in\mathcal{X}\}$. Now use
Lemma~\ref{lem20}.
\end{proof}
\begin{lemma}\label{salam}
For infinite $\Gamma$, we have:
\[\{((x_\alpha)_{\alpha\in\Gamma},(y_\alpha)_{\alpha\in\Gamma}):
{\rm card}(\{\beta\in\Gamma:x_\beta\neq y_\beta\})\leq\:{\rm card}(\{\beta\in\Gamma:x_\beta=y_\beta\})\}\subseteq P(\mathcal{H},\mathcal{X})\:.\]
In particular, 
\[\{((x_\alpha)_{\alpha\in\Gamma},(y_\alpha)_{\alpha\in\Gamma}):
\{\beta\in\Gamma:x_\beta\neq y_\beta\}{\rm \: is \: finite}\}
\subseteq P(\mathcal{H},\mathcal{X})\:.\]
\end{lemma}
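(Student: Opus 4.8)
The plan is to produce, for every pair $((x_\alpha)_{\alpha\in\Gamma},(y_\alpha)_{\alpha\in\Gamma})$ lying in the left-hand set, an explicit net in $\mathcal H$ realizing proximality. Write $D=\{\beta\in\Gamma:x_\beta\neq y_\beta\}$ and $E=\{\beta\in\Gamma:x_\beta=y_\beta\}=\Gamma\setminus D$, so the hypothesis reads ${\rm card}(D)\leq{\rm card}(E)$. The first observation I would record is that $E$ must be infinite: were $E$ finite, the hypothesis would force $D$ finite as well, hence $\Gamma=D\cup E$ finite, contrary to assumption. Consequently ${\rm card}(E)$ is infinite and ${\rm card}(\Gamma)={\rm card}(D)+{\rm card}(E)={\rm card}(E)$, so one may fix a bijection $\iota:\Gamma\to E$; in particular $\iota$ is an injection of $\Gamma$ into itself whose image is contained in $E$.

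Next I would build the net indexed by the directed set of finite subsets of $\Gamma$, ordered by inclusion. Given a finite $F\subseteq\Gamma$, the map $\iota|_F$ is injective with finite image $\iota(F)$, and since $\Gamma$ is infinite both $\Gamma\setminus F$ and $\Gamma\setminus\iota(F)$ have cardinality ${\rm card}(\Gamma)$; pick any bijection $h_F:\Gamma\setminus F\to\Gamma\setminus\iota(F)$ and set $\varphi_F:=\iota|_F\cup h_F$. Then $\varphi_F$ is a bijection of $\Gamma$, so $\sigma_{\varphi_F}\in\mathcal H$, and $\varphi_F(\alpha)=\iota(\alpha)$ for every $\alpha\in F$.

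It remains to check that $\{\sigma_{\varphi_F}\}_F$ works. Fix $\alpha\in\Gamma$; for every finite $F\supseteq\{\alpha\}$ the $\alpha$-coordinate of $\sigma_{\varphi_F}((x_\alpha)_{\alpha\in\Gamma})$ equals $x_{\varphi_F(\alpha)}=x_{\iota(\alpha)}$, which is independent of $F$. Hence ${\displaystyle\lim_F\sigma_{\varphi_F}((x_\alpha)_{\alpha\in\Gamma})=(x_{\iota(\alpha)})_{\alpha\in\Gamma}}$ and likewise ${\displaystyle\lim_F\sigma_{\varphi_F}((y_\alpha)_{\alpha\in\Gamma})=(y_{\iota(\alpha)})_{\alpha\in\Gamma}}$, and these two points coincide because $\iota(\alpha)\in E$ forces $x_{\iota(\alpha)}=y_{\iota(\alpha)}$ for every $\alpha$. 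Thus $((x_\alpha)_{\alpha\in\Gamma},(y_\alpha)_{\alpha\in\Gamma})\in P(\mathcal H,\mathcal X)$, which proves the asserted inclusion. For the ``in particular'' clause, if $\{\beta\in\Gamma:x_\beta\neq y_\beta\}$ is finite then, $\Gamma$ being infinite, $E$ is cofinite, so ${\rm card}(D)<\aleph_0\leq{\rm card}(E)$ and the inclusion already established applies.

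The only slightly delicate ingredients are the infinite cardinal arithmetic used to produce $\iota$ and the routine extension of the partial injection $\iota|_F$ to a genuine bijection of $\Gamma$; neither is a real obstacle. If one wished to sidestep the global bijection $\iota$, an alternative is to choose, for each finite $F$, any bijection $\varphi_F$ of $\Gamma$ with $\varphi_F(F)\subseteq E$ (available since $E$ is infinite and $F$ is finite), then pass to a subnet of $\{(\sigma_{\varphi_F}((x_\alpha)_{\alpha\in\Gamma}),\sigma_{\varphi_F}((y_\alpha)_{\alpha\in\Gamma}))\}_F$ along which both coordinates converge, using compactness of $\mathcal X\times\mathcal X$ and discreteness of $X$ to see that the two limits agree on each coordinate.
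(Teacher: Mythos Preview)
Your proof is correct and proceeds by a genuinely different route from the paper's. The paper reduces to the countable case already established in Lemma~\ref{lem20}: it partitions $\Gamma$ into countably infinite blocks $K_\theta$ (indexed by $\theta$ in the agreement set $A=E$), each containing infinitely many elements of $E$ and at most one element of $D$, applies Lemma~\ref{lem20} on each block to obtain sequences $\{\psi_n^\theta\}$ of permutations of $K_\theta$, and then glues these into a sequence $\psi_n=\bigcup_\theta\psi_n^\theta$ of bijections of $\Gamma$. Your argument bypasses this reduction entirely: from ${\rm card}(\Gamma)={\rm card}(E)$ you fix a single bijection $\iota:\Gamma\to E$ and, for each finite $F\subseteq\Gamma$, extend $\iota|_F$ to a bijection $\varphi_F$ of $\Gamma$, so that the net $\{\sigma_{\varphi_F}\}_F$ sends both points to the common limit $(x_{\iota(\alpha)})_{\alpha\in\Gamma}=(y_{\iota(\alpha)})_{\alpha\in\Gamma}$. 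Your approach is shorter and self-contained, needing neither the partition-and-glue construction nor the prior countable result; the paper's approach, by contrast, fits its expository arc of first settling the countable case and then leveraging it, and yields a \emph{sequence} (rather than a net) of bijections witnessing proximality.
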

\begin{proof}
Suppose $\Gamma$ is infinite. For $(x_\alpha)_{\alpha\in\Gamma},(y_\alpha)_{\alpha\in\Gamma}\in\mathcal{X}$,
let:
\[A:=\{\alpha\in\Gamma:x_\alpha=y_\alpha\}\SP,\SP B:=\{\alpha\in\Gamma:x_\alpha\neq y_\alpha\}\]
with ${\rm card}(B)\leq{\rm card}(A)$. There exists a one to one map
$\lambda:B\to A$. By ${\rm card}(\Gamma)={\rm card}(A)+{\rm card}(B)$ and 
${\rm card}(B)\leq{\rm card}(A)$, $A$ is infinite. 
Since $A$ is infinite, we have ${\rm card}(A)={\rm card}(A)\aleph_0$ so there exists
a bijection $\varphi:A\times{\mathbb N}\to A$. For all $\theta\in A$ let $K_\theta=
\varphi(\{\theta\}\times{\mathbb N})\cup\lambda^{-1}(\theta)$. Thus
$K_\theta$s are disjoint infinite countable subsets of $\Gamma$, as a matter of fact
$\{K_\theta:\theta\in A\}$ is a partition of $\Gamma$ to some of its infinite countable subsets. 
For all $\theta\in A$, $\{\alpha\in K_\theta:x_\alpha=y_\alpha\}=
\varphi(\{\theta\}\times{\mathbb N})$ is infinite and $K_\theta$ is infinite countable. By
Lemma~\ref{lem20} there exists a sequence $\{\psi_n^\theta\}$ of permutations on
$K_\theta$ such that ${\displaystyle\lim_{n\to\infty}\sigma_{\psi_n^\theta}
(x_\alpha)_{\alpha\in K_\theta}}={\displaystyle\lim_{n\to\infty}\sigma_{\psi_n^\theta}
(y_\alpha)_{\alpha\in K_\theta}}$. For all $n\geq1$ let
$\psi_n={\displaystyle\bigcup_{\theta\in A}\psi_n^\theta}$, then
$\psi_n:\Gamma\to\Gamma$ is bijective and ${\displaystyle\lim_{n\to\infty}\sigma_{\psi_n}
(x_\alpha)_{\alpha\in\Gamma}}={\displaystyle\lim_{n\to\infty}\sigma_{\psi_n}
(y_\alpha)_{\alpha\in \Gamma}}$, which completes the proof.
\end{proof}
\begin{theorem}
The collection
$\{((x_\alpha)_{\alpha\in\Gamma},(y_\alpha)_{\alpha\in\Gamma}):
\max({\rm card}(\{\beta\in\Gamma:x_\beta\neq y_\beta\}),\aleph_0)\leq\:{\rm card}(\{\beta\in\Gamma:x_\beta=y_\beta\})\}$ is a subset of 
$P({\mathcal H},{\mathcal X})$.
\end{theorem}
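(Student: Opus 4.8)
The plan is to obtain this as an immediate corollary of Lemma~\ref{salam}. Fix a pair $((x_\alpha)_{\alpha\in\Gamma},(y_\alpha)_{\alpha\in\Gamma})$ in the displayed collection and set $A:=\{\beta\in\Gamma:x_\beta=y_\beta\}$ and $B:=\{\beta\in\Gamma:x_\beta\neq y_\beta\}$. First I would unwind the hypothesis $\max({\rm card}(B),\aleph_0)\leq{\rm card}(A)$: by elementary cardinal arithmetic an inequality $\max(\kappa,\aleph_0)\leq\mu$ holds precisely when $\kappa\leq\mu$ and $\aleph_0\leq\mu$. Hence the hypothesis says exactly that ${\rm card}(B)\leq{\rm card}(A)$ and that $A$ is infinite.

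Next, since $A\subseteq\Gamma$, the infiniteness of $A$ forces $\Gamma$ to be infinite, so the standing hypothesis of Lemma~\ref{salam} is satisfied; moreover the remaining condition of that lemma, namely ${\rm card}(B)\leq{\rm card}(A)$, is precisely what was extracted above. Applying Lemma~\ref{salam} therefore gives $((x_\alpha)_{\alpha\in\Gamma},(y_\alpha)_{\alpha\in\Gamma})\in P(\mathcal{H},\mathcal{X})$, and since the pair was arbitrary in the collection, the claimed inclusion follows.

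I do not expect any real obstacle here: the substantive construction — partitioning $\Gamma$ into countably infinite blocks, each carrying infinitely many indices of $A$, and running the countable case (Lemma~\ref{lem20}) simultaneously on all blocks — has already been carried out inside the proof of Lemma~\ref{salam}. The role of the $\aleph_0$ term in the maximum is purely to guarantee that $A$, and hence $\Gamma$, is infinite; this is genuinely needed, since for a two-point $\Gamma$ one has $P(\mathcal{H},\mathcal{X})=\{(x,x):x\in\mathcal{X}\}$ while the inequality ${\rm card}(B)\leq{\rm card}(A)$ alone would also be met by non-diagonal pairs. Thus the theorem is a streamlined restatement of Lemma~\ref{salam}, and the proof amounts to the cardinal-arithmetic observation above followed by a one-line appeal to that lemma.
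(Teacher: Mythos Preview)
Your proposal is correct and matches the paper's own proof: both observe that the $\aleph_0$ in the maximum forces $A$ (and hence $\Gamma$) to be infinite, so that Lemma~\ref{salam} applies directly via the remaining inequality ${\rm card}(B)\leq{\rm card}(A)$. The paper phrases the first step contrapositively (for finite $\Gamma$ the collection is empty), but the content is identical.
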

\begin{proof}
If $\Gamma$ is finite, then 
$\{((x_\alpha)_{\alpha\in\Gamma},(y_\alpha)_{\alpha\in\Gamma}):
\max({\rm card}(\{\beta\in\Gamma:x_\beta\neq y_\beta\}),\aleph_0)\leq\:{\rm card}(\{\beta\in\Gamma:x_\beta=y_\beta\})\}=\varnothing$. Use Lemma~\ref{salam} to complete the proof. 
\end{proof}
\section{Syndetically proximal relations of  $(\mathcal{H},\mathcal{X})$}
\noindent In this section we prove:
{\small
\[L(\mathcal{H},\mathcal{X})=\left\{\begin{array}{lc}
\{((x_\alpha)_{\alpha\in\Gamma},(y_\alpha)_{\alpha\in\Gamma})
\in \mathcal{X}\times\mathcal{X}:
\{\gamma\in\Gamma:x_\gamma\neq y_\gamma\}{\rm \: is \: finite}\} & \Gamma{\rm\: is \: infinite\:,} \\
\{(x,x):x\in \mathcal{X}\} & \Gamma{\rm\: is \: finite\:.}\end{array}\right.\]
}
\begin{lemma}\label{lem60}
For $(x_\alpha)_{\alpha\in\Gamma},(y_\alpha)_{\alpha\in\Gamma},
(u_\alpha)_{\alpha\in\Gamma}\in\mathcal{X}$,
and $p,q\in X$ let:
 \[z_\alpha:=\left\{\begin{array}{lc} q & x_\alpha\neq y_\alpha\:, \\ u_\alpha & x_\alpha= y_\alpha\:,\end{array}\right.\SP{\rm and}\SP 
 w_\alpha:=\left\{\begin{array}{lc} p & x_\alpha\neq y_\alpha\:, \\ u_\alpha & x_\alpha= y_\alpha\:.\end{array}\right.\]
 We have:
\begin{itemize}
\item[1.] if $((x_\alpha)_{\alpha\in\Gamma},(y_\alpha)_{\alpha\in\Gamma})\in P(
\mathcal{H},\mathcal{X})$, then 
$((z_\alpha)_{\alpha\in\Gamma},(w_\alpha)_{\alpha\in\Gamma})\in P(
\mathcal{H},\mathcal{X})$,
\item[2.] if $((x_\alpha)_{\alpha\in\Gamma},(y_\alpha)_{\alpha\in\Gamma})\in L(
\mathcal{H},\mathcal{X})$, then 
$((z_\alpha)_{\alpha\in\Gamma},(w_\alpha)_{\alpha\in\Gamma})\in L(
\mathcal{H},\mathcal{X})$.
\end{itemize}
\end{lemma}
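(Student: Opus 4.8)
The plan is to handle the two parts uniformly by exhibiting, for an arbitrary net $\{\sigma_{\varphi_\lambda}\}$ in $\mathcal H$ witnessing proximality of $(x,y)$, that the \emph{same} net witnesses proximality of $(z,w)$. First I would set $B:=\{\alpha\in\Gamma:x_\alpha\neq y_\alpha\}$ and $A:=\Gamma\setminus B$, so that by construction $z_\alpha=w_\alpha=u_\alpha$ for $\alpha\in A$, while $z_\alpha=q$ and $w_\alpha=p$ for $\alpha\in B$. Suppose $\{\sigma_{\varphi_\lambda}\}_{\lambda\in\Lambda}$ is a net in $\mathcal H$ with $\lim_\lambda\sigma_{\varphi_\lambda}(x)=\lim_\lambda\sigma_{\varphi_\lambda}(y)=:(r_\alpha)_{\alpha\in\Gamma}$. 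Fix $\theta\in\Gamma$; since $X$ is discrete, there is $\lambda_0$ with $x_{\varphi_\lambda(\theta)}=y_{\varphi_\lambda(\theta)}=r_\theta$ for all $\lambda\geq\lambda_0$, which forces $\varphi_\lambda(\theta)\in A$ for $\lambda\geq\lambda_0$ (as $x$ and $y$ agree only on $A$). Consequently, for $\lambda\geq\lambda_0$ we get $z_{\varphi_\lambda(\theta)}=u_{\varphi_\lambda(\theta)}=w_{\varphi_\lambda(\theta)}$, so the $\theta$-coordinates of $\sigma_{\varphi_\lambda}(z)$ and $\sigma_{\varphi_\lambda}(w)$ eventually coincide. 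As $\theta$ was arbitrary, $\lim_\lambda\sigma_{\varphi_\lambda}(z)=\lim_\lambda\sigma_{\varphi_\lambda}(w)$ (both limits exist because each coordinate is eventually constant), giving part~1.

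For part~2, recall $L(\mathcal H,\mathcal X)=\{(a,b):\overline{\mathcal H(a,b)}\subseteq P(\mathcal H,\mathcal X)\}$, where $\mathcal H$ acts diagonally. Assume $(x,y)\in L(\mathcal H,\mathcal X)$; I must show $\overline{\mathcal H(z,w)}\subseteq P(\mathcal H,\mathcal X)$. A point of $\overline{\mathcal H(z,w)}$ is a limit $\lim_\mu(\sigma_{\eta_\mu}(z),\sigma_{\eta_\mu}(w))$ for some net $\{\eta_\mu\}$ of permutations of $\Gamma$. The key observation is that the pair $(z,w)$ is obtained from $(x,y)$ by a coordinatewise rule that depends only on whether $x_\alpha=y_\alpha$, and this ``agreement pattern'' is equivariant: for a permutation $\eta$, the $\alpha$-coordinate of $\sigma_\eta(x)$ equals that of $\sigma_\eta(y)$ iff $\eta(\alpha)\in A$, i.e.\ the agreement set of $(\sigma_\eta(x),\sigma_\eta(y))$ is $\eta^{-1}(A)$. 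One checks directly that $\sigma_\eta(z)$ and $\sigma_\eta(w)$ are exactly the pair produced from $(\sigma_\eta(x),\sigma_\eta(y))$ and the auxiliary point $\sigma_\eta(u)$ by the same rule of Lemma~\ref{lem60} (with the same constants $p,q$). Passing to limits along $\{\eta_\mu\}$ and using that $X$ is finite discrete (so we may refine to a subnet on which each of $\sigma_{\eta_\mu}(x),\sigma_{\eta_\mu}(y),\sigma_{\eta_\mu}(u)$ converges coordinatewise), the limit of $(\sigma_{\eta_\mu}(z),\sigma_{\eta_\mu}(w))$ is again obtained from the limit pair $(x',y'):=\lim(\sigma_{\eta_\mu}(x),\sigma_{\eta_\mu}(y))\in\overline{\mathcal H(x,y)}\subseteq P(\mathcal H,\mathcal X)$ and $u':=\lim\sigma_{\eta_\mu}(u)$ by the same construction; by part~1 applied to $(x',y')\in P(\mathcal H,\mathcal X)$, this limit lies in $P(\mathcal H,\mathcal X)$. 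Hence $\overline{\mathcal H(z,w)}\subseteq P(\mathcal H,\mathcal X)$, i.e.\ $(z,w)\in L(\mathcal H,\mathcal X)$.

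The step I expect to be the main obstacle is making rigorous the claim in part~2 that ``taking a limit of the construction is the construction of the limit.'' The subtlety is that the rule defining $z,w$ is a case split on the condition $x_\alpha=y_\alpha$, and under limits this condition need not be preserved coordinatewise unless one is careful; the clean fix is to first pass to a subnet along which $\sigma_{\eta_\mu}(x)$, $\sigma_{\eta_\mu}(y)$, and $\sigma_{\eta_\mu}(u)$ all converge (possible since $X^\Gamma$ is compact), and then argue coordinatewise that for each $\alpha$ the value $z_{\eta_\mu(\alpha)}$ is eventually constant and matches the rule applied to the limiting coordinates — here the discreteness of $X$ does the work, exactly as in the proof of Theorem~\ref{th}. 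I would also double-check the edge case where $B=\varnothing$: then $z=w=u$ and both assertions are trivial, and where $A=\varnothing$, which can only happen if $x,y$ differ in every coordinate, in which case $(x,y)\notin P(\mathcal H,\mathcal X)$ by Lemma~\ref{lem10} (for infinite $\Gamma$) and the hypotheses are vacuous.
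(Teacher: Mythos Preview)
Your overall strategy matches the paper's, but there is a genuine (if easily repaired) gap in your argument for part~1. From $z_{\varphi_\lambda(\theta)}=u_{\varphi_\lambda(\theta)}=w_{\varphi_\lambda(\theta)}$ for $\lambda\geq\lambda_0$ you correctly deduce that the $\theta$-coordinates of $\sigma_{\varphi_\lambda}(z)$ and $\sigma_{\varphi_\lambda}(w)$ eventually \emph{coincide}, but you then assert that ``both limits exist because each coordinate is eventually constant.'' That is false: $u_{\varphi_\lambda(\theta)}$ need not stabilise, since $\varphi_\lambda(\theta)$ may wander through $A$ picking up different values of $u$. (For a concrete instance take $x=y$, so $A=\Gamma$ and $z=w=u$; choose $u$ non-constant and let the $\varphi_\lambda$ be shifts so that $u_{\varphi_\lambda(\theta)}$ oscillates.) Coordinatewise eventual agreement of two nets does \emph{not} by itself produce a common limit. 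The paper fixes this exactly the way you yourself do in part~2: by compactness of $\mathcal X$, pass first to a subnet along which $\sigma_{\varphi_\lambda}(u)$ converges to some $(v_\alpha)_{\alpha\in\Gamma}$; then the equalities $z_{\varphi_\lambda(\theta)}=u_{\varphi_\lambda(\theta)}=w_{\varphi_\lambda(\theta)}$ force $\sigma_{\varphi_\lambda}(z)$ and $\sigma_{\varphi_\lambda}(w)$ to converge along that subnet to the common limit $(v_\alpha)_{\alpha\in\Gamma}$.

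Your part~2 is essentially the paper's proof: pass to a subnet on which $\sigma_{\eta_\mu}(x),\sigma_{\eta_\mu}(y)$ converge, observe via discreteness of $X$ that the limit pair $(s,t)$ is produced from $(x',y')\in\overline{\mathcal H(x,y)}\subseteq P(\mathcal H,\mathcal X)$ by the same rule, and invoke part~1. The only cosmetic difference is that the paper does not track the limit $u'$ of $\sigma_{\eta_\mu}(u)$ separately; it simply takes the auxiliary point to be $v_\alpha:=s_\alpha$, which works because on $\{\alpha:x'_\alpha=y'_\alpha\}$ one already has $s_\alpha=t_\alpha$. Your ``limit of the construction is the construction of the limit'' worry is exactly the right place to be careful, and your proposed remedy (refine to a subnet, then argue coordinate by coordinate using discreteness) is precisely what the paper does.
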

\begin{proof}
1) Suppose $((x_\alpha)_{\alpha\in\Gamma},(y_\alpha)_{\alpha\in\Gamma})\in P(
\mathcal{H},\mathcal{X})$, then there exists a net $\{\sigma_{\varphi_\lambda}\}_{\lambda\in 
\Lambda}$ in $\mathcal H$ such that ${\displaystyle\lim_{\lambda\in\Lambda}
\sigma_{\varphi_\lambda}((x_\alpha)_{\alpha\in\Gamma})}=
{\displaystyle\lim_{\lambda\in\Lambda}
\sigma_{\varphi_\lambda}((y_\alpha)_{\alpha\in\Gamma})}$. Thus
${\displaystyle\lim_{\lambda\in\Lambda}
((x_{\varphi_\lambda(\alpha)})_{\alpha\in\Gamma})}={\displaystyle\lim_{\lambda\in\Lambda}
((y_{\varphi_\lambda(\alpha)})_{\alpha\in\Gamma})}$, 
i.e. for all $\alpha\in\Gamma$
there exists $\kappa_\alpha\in\Lambda$ such that:
\[\forall\lambda\geq\kappa_\alpha\:(x_{\varphi_\lambda(\alpha)}=y_{\varphi_\lambda(\alpha)})\:.\]
Hence, for all $\lambda\geq\kappa_\alpha$ we have $z_{\varphi_\lambda(\alpha)}=u_{\varphi_\lambda(\alpha)}=w_{\varphi_\lambda(\alpha)}$.
On the other hand the net $\{(u_{\varphi_\lambda(\alpha)})_{\alpha\in\Gamma}\}_{\lambda\in 
\Lambda}$ has a convergent subnet like 
$\{(u_{\varphi_{\lambda_\theta}(\alpha)})_{\alpha\in\Gamma}\}_{\theta\in T}$
to a point of $\mathcal{X}$, say $(v_\alpha)_{\alpha\in\Gamma}$, since $\mathcal{X}$
is compact. For all $\alpha\in\Gamma$ there exists $\theta_\alpha\in T$ such that
$\lambda_{\theta_\alpha}\geq \kappa_\alpha$, and moreover
\[\forall\theta\geq\theta_\alpha\:(u_{\varphi_{\lambda_\theta}(\alpha)}=v_\alpha)\:.\]
Note that for all $\theta\geq\theta_\alpha$
we have $\lambda_\theta\geq\kappa_\alpha$, leads us to:
\[\forall\theta\geq\theta_\alpha\:
(z_{\varphi_{\lambda_\theta}(\alpha)}=v_\alpha=w_{\varphi_{\lambda_\theta}(\alpha)})\:.\]
Hence
${\displaystyle\lim_{\theta \in T}
\sigma_{\varphi_{\lambda_\theta}}((z_\alpha)_{\alpha\in\Gamma})}=
{\displaystyle\lim_{\theta\in T}
\sigma_{\varphi_{\lambda_\theta}}((w_\alpha)_{\alpha\in\Gamma})}$
and $((z_\alpha)_{\alpha\in\Gamma},(w_\alpha)_{\alpha\in\Gamma})\in P(
\mathcal{H},\mathcal{X})$.
\\
2) Now suppose  $((x_\alpha)_{\alpha\in\Gamma},(y_\alpha)_{\alpha\in\Gamma})\in L(
\mathcal{H},\mathcal{X})$ and
$((s_\alpha)_{\alpha\in\Gamma},(t_\alpha)_{\alpha\in\Gamma})$ is an
element of $\overline{\mathcal{H} 
((z_\alpha)_{\alpha\in\Gamma},(w_\alpha)_{\alpha\in\Gamma})}$.
There exists a net $\{\sigma_{\varphi_\lambda}\}_{\lambda\in \Lambda}$ in $\mathcal H$,
with 
\[((s_\alpha)_{\alpha\in\Gamma},(t_\alpha)_{\alpha\in\Gamma})={\displaystyle\lim_{
\lambda\in\Lambda}\sigma_{\varphi_\lambda}(
(z_\alpha)_{\alpha\in\Gamma},(w_\alpha)_{\alpha\in\Gamma})}={\displaystyle\lim_{
\lambda\in\Lambda}((z_{\varphi_\lambda(\alpha)})_{\alpha\in\Gamma}
,(w_{\varphi_\lambda(\alpha)})_{\alpha\in\Gamma})}\:.\]
On the other hand the net $\{((x_{\varphi_\lambda(\alpha)})_{\alpha\in\Gamma}
,(y_{\varphi_\lambda(\alpha)})_{\alpha\in\Gamma})\}_{\lambda\in\Lambda}$
has a convergent subnet in compact space $\mathcal{X}\times\mathcal{X}$, without
loss of generality we may suppose \linebreak
$\{((x_{\varphi_\lambda(\alpha)})_{\alpha\in\Gamma}
,(y_{\varphi_\lambda(\alpha)})_{\alpha\in\Gamma})\}_{\lambda\in\Lambda}$ itself
converges to a point of $\mathcal{X}\times\mathcal{X}$ like
\linebreak
$((m_\alpha)_{\alpha\in\Gamma},(n_\alpha)_{\alpha\in\Gamma})$.
Hence 
$((m_\alpha)_{\alpha\in\Gamma},(n_\alpha)_{\alpha\in\Gamma})\in
\overline{\mathcal{H} 
((x_\alpha)_{\alpha\in\Gamma},(y_\alpha)_{\alpha\in\Gamma})}\subseteq P
(\mathcal{H},\mathcal{X})$. Now for $\alpha\in\Gamma$  there exists $\kappa\in\Lambda$
such that:
\[\forall\lambda\geq\kappa\:((m_\alpha,n_\alpha)=(x_{\varphi_\lambda(\alpha)},y_{\varphi_\lambda(\alpha)}))\:.\]
Hence we have:
\begin{eqnarray*}
m_\alpha\neq n_\alpha & \Rightarrow & 
	(\forall\lambda\geq\kappa\:(x_{\varphi_\lambda(\alpha)}\neq y_{\varphi_\lambda(\alpha)})) \\
& \Rightarrow & (\forall\lambda\geq\kappa\:(z_{\varphi_\lambda(\alpha)}=q \wedge
	w_{\varphi_\lambda(\alpha)}=p)) \\
& \Rightarrow & {\displaystyle\lim_{\lambda\in\Lambda}z_{\varphi_\lambda(\alpha)}=q}
	\wedge {\displaystyle\lim_{\lambda\in\Lambda}w_{\varphi_\lambda(\alpha)}=p} \\
& \Rightarrow & (s_\alpha,t_\alpha)=(q,p)
\end{eqnarray*}
and:
\begin{eqnarray*}
m_\alpha= n_\alpha & \Rightarrow & 
	(\forall\lambda\geq\kappa\:(x_{\varphi_\lambda(\alpha)}= y_{\varphi_\lambda(\alpha)})) \\
& \Rightarrow & (\forall\lambda\geq\kappa\:(z_{\varphi_\lambda(\alpha)}=
	w_{\varphi_\lambda(\alpha)})) \\
& \Rightarrow & s_\alpha={\displaystyle\lim_{\lambda\in\Lambda}z_{\varphi_\lambda(\alpha)}}
	={\displaystyle\lim_{\lambda\in\Lambda}w_{\varphi_\lambda(\alpha)}}=t_\alpha \\
& \Rightarrow & s_\alpha=t_\alpha
\end{eqnarray*}
Hence for $(v_\alpha)_{\alpha\in\Gamma}:=(s_\alpha)_{\alpha\in\Gamma}$ we have:
\[s_\alpha=\left\{\begin{array}{lc} q & m_\alpha\neq n_\alpha \: ,\\
	v_\alpha & m_\alpha= n_\alpha \:, \end{array}\right.\SP{\rm and}\SP
	t_\alpha=\left\{\begin{array}{lc} p & m_\alpha\neq n_\alpha \: ,\\
	v_\alpha & m_\alpha= n_\alpha \:. \end{array}\right.\tag{*}\]
Using (1), $((m_\alpha)_{\alpha\in\Gamma},(n_\alpha)_{\alpha\in\Gamma})\in P
(\mathcal{H},\mathcal{X})$ and (*) we have
$((s_\alpha)_{\alpha\in\Gamma},(t_\alpha)_{\alpha\in\Gamma})\in P
(\mathcal{H},\mathcal{X})$, which completes the proof.
\end{proof}
\begin{lemma}\label{lem40}
We have:
\[L(\mathcal{H},\mathcal{X})\subseteq\{((x_\alpha)_{\alpha\in\Gamma},(y_\alpha)_{\alpha\in\Gamma})
\in \mathcal{X}\times\mathcal{X}:
\{\gamma\in\Gamma:x_\gamma\neq y_\gamma\}{\rm \: is \: finite}\}
\:.\]
\end{lemma}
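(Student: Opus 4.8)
The plan is to prove the contrapositive: writing $(x,y):=((x_\alpha)_{\alpha\in\Gamma},(y_\alpha)_{\alpha\in\Gamma})$ and $B:=\{\gamma\in\Gamma:x_\gamma\neq y_\gamma\}$, I will show that if $B$ is infinite then $(x,y)\notin L(\mathcal{H},\mathcal{X})$. If $\Gamma$ is finite then $B$ is automatically finite and there is nothing to prove, so assume $\Gamma$ is infinite. By the definition $L(\mathcal{H},\mathcal{X})=\{(u,v):\overline{\mathcal{H}(u,v)}\subseteq P(\mathcal{H},\mathcal{X})\}$, it suffices to produce a single point of $\overline{\mathcal{H}(x,y)}$ that does not lie in $P(\mathcal{H},\mathcal{X})$.

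First I would construct, for each finite $F\subseteq\Gamma$, a bijection $\varphi_F:\Gamma\to\Gamma$ with $\varphi_F(F)\subseteq B$: choose $B_F\subseteq B$ with ${\rm card}(B_F)={\rm card}(F)$, take any bijection $F\to B_F$, and extend it by any bijection $\Gamma\setminus F\to\Gamma\setminus B_F$ (such a bijection exists because both complements have cardinality ${\rm card}(\Gamma)$, since $\Gamma$ is infinite and $F$, $B_F$ are finite). Next, consider the net $\{\sigma_{\varphi_F}(x,y)\}_{F}$ in the compact space $\mathcal{X}\times\mathcal{X}$, indexed by the finite subsets of $\Gamma$ directed by inclusion, and pass to a convergent subnet $\sigma_{\varphi_{F_\mu}}(x,y)\to(a,b)=((a_\alpha)_{\alpha\in\Gamma},(b_\alpha)_{\alpha\in\Gamma})$.

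The key point is then that $(a,b)$ has no agreement coordinate. Fix $\gamma\in\Gamma$. By cofinality of the subnet, eventually $\gamma\in F_\mu$, hence eventually $\varphi_{F_\mu}(\gamma)\in B$, i.e. $x_{\varphi_{F_\mu}(\gamma)}\neq y_{\varphi_{F_\mu}(\gamma)}$; and by coordinatewise convergence together with discreteness of $X$ we have eventually $x_{\varphi_{F_\mu}(\gamma)}=a_\gamma$ and $y_{\varphi_{F_\mu}(\gamma)}=b_\gamma$. Combining these for large $\mu$ gives $a_\gamma\neq b_\gamma$. Since $\gamma$ was arbitrary, $\{\gamma\in\Gamma:a_\gamma=b_\gamma\}=\varnothing$, so by Lemma~\ref{lem10} (applicable since $\Gamma$ is infinite) $(a,b)\notin P(\mathcal{H},\mathcal{X})$, whereas $(a,b)\in\overline{\mathcal{H}(x,y)}$. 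Hence $(x,y)\notin L(\mathcal{H},\mathcal{X})$, which is the desired contrapositive.

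I expect the only mildly delicate point to be the net bookkeeping: because $\Gamma$ may be uncountable, $\mathcal{X}$ need not be metrizable, so one must genuinely argue with a net and extract a convergent subnet by compactness rather than use a sequence; the cofinality of that subnet in the poset of finite subsets of $\Gamma$ is exactly what makes ``$\varphi_{F_\mu}(\gamma)\in B$ eventually'' hold for each $\gamma$. The construction of the $\varphi_F$ (a routine cardinality extension argument) and the final appeal to Lemma~\ref{lem10} are straightforward.
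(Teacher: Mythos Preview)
Your argument is correct, and it takes a genuinely different route from the paper's proof. The paper first invokes Lemma~\ref{lem60}(2) to replace $(x,y)$ by the simpler ``indicator'' pair $((z_\alpha)_{\alpha\in\Gamma},(p)_{\alpha\in\Gamma})$ with $z_\alpha=q$ on $B$ and $z_\alpha=p$ off $B$; it then exhibits the explicit limit point $((q)_{\alpha\in\Gamma},(p)_{\alpha\in\Gamma})$ in the orbit closure using finite-support transpositions, and observes this pair is not proximal. You instead work directly with $(x,y)$: you build bijections $\varphi_F$ pushing each finite $F$ into $B$, pass to a convergent subnet by compactness, and show the abstract limit $(a,b)$ has empty agreement set, whence $(a,b)\notin P(\mathcal{H},\mathcal{X})$ by Lemma~\ref{lem10}. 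Your approach is more self-contained in that it avoids the auxiliary Lemma~\ref{lem60} altogether; the paper's approach is more explicit in that it names the bad limit point and uses only concrete transpositions rather than a subnet extraction. Both hinge on the same combinatorial idea (moving any prescribed finite set of coordinates into $B$ via a bijection), and both ultimately appeal to the fact that a pair with no agreement coordinate cannot be proximal under $\mathcal{H}$.
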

\begin{proof}
Consider $(x_\alpha)_{\alpha\in\Gamma},(y_\alpha)_{\alpha\in\Gamma}\in\mathcal{X}$
such that $B:=\{\alpha\in\Gamma:x_\alpha\neq y_\alpha\}$ is infinite. Choose distinct
$p,q\in X$ and let:
\[z_\alpha:=\left\{\begin{array}{lc} q & \alpha\in B\:, \\ p & \alpha\notin B\:.\end{array}\right.\]
By Lemma~\ref{lem60}, if $((x_\alpha)_{\alpha\in\Gamma},(y_\alpha)_{\alpha\in\Gamma})\in L(
\mathcal{H},\mathcal{X})$, then 
$((z_\alpha)_{\alpha\in\Gamma},(p)_{\alpha\in\Gamma})\in L(\mathcal{H},\mathcal{X})$.
We show $((q)_{\alpha\in\Gamma},(p)_{\alpha\in\Gamma})\in\overline{\mathcal{H}
((z_\alpha)_{\alpha\in\Gamma},(p)_{\alpha\in\Gamma})}$. Suppose $U$ is an open
neighbourhood of $((q)_{\alpha\in\Gamma},(p)_{\alpha\in\Gamma})$, then there exists
distinct $\alpha_1,\ldots,\alpha_n\in\Gamma$ such that for:
\[V_\alpha=\left\{\begin{array}{lc} \{q\} & \alpha=\alpha_1,\ldots,\alpha_n\:, \\ X &
\alpha\neq \alpha_1,\ldots,\alpha_n\:, \end{array}\right.\SP {\rm and}\SP
W_\alpha=\{p\}\:(\forall\alpha\in\Gamma)\:,\]
we have ${\displaystyle\prod_{\alpha\in\Gamma}V_\alpha}\times
{\displaystyle\prod_{\alpha\in\Gamma}W_\alpha}\subseteq U$.
Since $B$ is infinite, we could choose distinct $\beta_1,\ldots,\beta_n\in B$ 
such that $\{\alpha_1,\ldots,\alpha_n\}\cap\{\beta_1,\ldots,\beta_n\}=\varnothing$. 
Define $\psi:\Gamma\to\Gamma$ by
\[\psi(\alpha):=\left\{\begin{array}{lc} \alpha_i & \alpha=\beta_i,i=1,\ldots,n\:, \\
\beta_i & \alpha=\alpha_i,i=1,\ldots,n\:, \\ \alpha & {\rm otherwise\:,} \end{array}\right.\]
then $\psi:\Gamma\to\Gamma$ is bijective, $\sigma_\psi\in{\mathcal{H}}$ 
and 
\[\sigma_{\psi}((z_\alpha)_{\alpha\in\Gamma},
(p)_{\alpha\in\Gamma})=(\sigma_\psi((z_\alpha)_{\alpha\in\Gamma}),
\sigma_\psi((p)_{\alpha\in\Gamma}))=((z_{\psi(\alpha)})_{\alpha\in\Gamma},
(p)_{\alpha\in\Gamma})\in U\:.\]
Hence 
$((q)_{\alpha\in\Gamma},(p)_{\alpha\in\Gamma})\in\overline{\mathcal{H}
((z_\alpha)_{\alpha\in\Gamma},(p)_{\alpha\in\Gamma})}$. Since 
$((q)_{\alpha\in\Gamma},(p)_{\alpha\in\Gamma})\notin P(\mathcal{H},\mathcal{X})$,
we have $((z_\alpha)_{\alpha\in\Gamma},(p)_{\alpha\in\Gamma})
\notin L(\mathcal{H},\mathcal{X})$, which leads to  
 $((x_\alpha)_{\alpha\in\Gamma},(y_\alpha)_{\alpha\in\Gamma})\notin L(
\mathcal{H},\mathcal{X})$ and completes the proof.
\end{proof}
\noindent The proof of the following lemma is similar to that of 
Lemma~\ref{lem10}.
\begin{lemma}\label{lem30}
For $((x_\alpha)_{\alpha\in\Gamma},(y_\alpha)_{\alpha\in\Gamma})\in 
\mathcal{X}\times\mathcal{X}$  if
$\{\alpha\in\Gamma:x_\alpha\neq y_\alpha\}$ is finite and
$((z_\alpha)_{\alpha\in\Gamma},(w_\alpha)_{\alpha\in\Gamma})\in\overline{\mathcal{H}
((x_\alpha)_{\alpha\in\Gamma},(y_\alpha)_{\alpha\in\Gamma})}$, then 
$\{\alpha\in\Gamma:z_\alpha\neq w_\alpha\}$ is finite satisfying
${\rm card}(\{\alpha\in\Gamma:z_\alpha\neq w_\alpha\})\leq {\rm card}
(\{\alpha\in\Gamma:x_\alpha\neq y_\alpha\})$.
\end{lemma}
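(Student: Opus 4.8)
The plan is to imitate the argument of Lemma~\ref{lem10}, but now tracking not merely which coordinates agree, but actually bounding the number of coordinates on which $z$ and $w$ disagree in terms of the corresponding count for $x$ and $y$. Let me set up notation: write $B:=\{\alpha\in\Gamma:x_\alpha\neq y_\alpha\}$, which is finite by hypothesis, say $|B|=k$. Suppose $((z_\alpha)_{\alpha\in\Gamma},(w_\alpha)_{\alpha\in\Gamma})\in\overline{\mathcal{H}((x_\alpha)_{\alpha\in\Gamma},(y_\alpha)_{\alpha\in\Gamma})}$, so there is a net $\{\sigma_{\varphi_\lambda}\}_{\lambda\in\Lambda}$ in $\mathcal{H}$ with $\sigma_{\varphi_\lambda}((x_\alpha)_{\alpha\in\Gamma})\to(z_\alpha)_{\alpha\in\Gamma}$ and $\sigma_{\varphi_\lambda}((y_\alpha)_{\alpha\in\Gamma})\to(w_\alpha)_{\alpha\in\Gamma}$; equivalently $(x_{\varphi_\lambda(\alpha)})_{\alpha\in\Gamma}\to(z_\alpha)_{\alpha\in\Gamma}$ and $(y_{\varphi_\lambda(\alpha)})_{\alpha\in\Gamma}\to(w_\alpha)_{\alpha\in\Gamma}$ coordinatewise.

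First I would observe that $z_\alpha\neq w_\alpha$ forces, for all large $\lambda$, $x_{\varphi_\lambda(\alpha)}\neq y_{\varphi_\lambda(\alpha)}$, i.e. $\varphi_\lambda(\alpha)\in B$. This is the discreteness-of-$X$ argument: since $X$ is discrete, convergence of $(x_{\varphi_\lambda(\alpha)})_\lambda$ and $(y_{\varphi_\lambda(\alpha)})_\lambda$ to $z_\alpha$ and $w_\alpha$ respectively means these sequences are eventually constant equal to $z_\alpha$, $w_\alpha$; so if $z_\alpha\neq w_\alpha$ then eventually $x_{\varphi_\lambda(\alpha)}\neq y_{\varphi_\lambda(\alpha)}$, hence eventually $\varphi_\lambda(\alpha)\in B$.

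Next, set $C:=\{\alpha\in\Gamma:z_\alpha\neq w_\alpha\}$ and suppose toward a contradiction that $|C|>|B|$, or simply pick any $k+1$ distinct elements $\alpha_0,\dots,\alpha_k\in C$ (this also covers the case $C$ infinite). For each $i$ there is $\lambda_i\in\Lambda$ with $\varphi_\lambda(\alpha_i)\in B$ for all $\lambda\geq\lambda_i$; choose $\mu\in\Lambda$ with $\mu\geq\lambda_i$ for all $i=0,\dots,k$ (directedness). Then $\varphi_\mu(\alpha_0),\dots,\varphi_\mu(\alpha_k)$ are $k+1$ distinct elements (injectivity of $\varphi_\mu$) all lying in $B$, contradicting $|B|=k$. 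Hence $C$ is finite with $|C|\leq|B|$, which is exactly the claim.

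The argument is essentially routine once set up — the only thing to be careful about is phrasing it so it simultaneously handles ``$C$ finite but larger than $B$'' and ``$C$ infinite'', which is why I would phrase it as: ``any finite subset of $C$ has cardinality at most $|B|$,'' from which both finiteness of $C$ and the cardinality bound follow. No genuine obstacle is expected; the discreteness step and the injectivity-of-$\varphi_\mu$ step are the two load-bearing observations, both already used in Lemma~\ref{lem10}.
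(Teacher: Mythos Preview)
Your proof is correct and follows essentially the same approach as the paper: show that any $n$ distinct coordinates on which $z$ and $w$ differ get mapped, by some bijection $\varphi$, injectively into $B=\{\alpha:x_\alpha\neq y_\alpha\}$, forcing $n\le|B|$. The only cosmetic difference is that you phrase the closure-membership via a convergent net and invoke directedness to synchronize the finitely many coordinates, whereas the paper works directly with a single basic open neighborhood of $((z_\alpha),(w_\alpha))$ determined by those coordinates and picks one orbit element inside it; the two formulations are equivalent.
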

\begin{proof}
For $n\geq1$, if there exists distinct $\alpha_1,\ldots,\alpha_n\in\Gamma$ with
$z_{\alpha_i}\neq w_{\alpha_i}$ for $i=1,\ldots,n$, then let:
\[U_\alpha:=\left\{\begin{array}{lc} \{z_\alpha\} & \alpha=\alpha_1,\ldots,\alpha_n\:, \\
X & \alpha\neq\alpha_1,\ldots,\alpha_n\:,\end{array}\right.\SP{\rm and}\SP
V_\alpha:=\left\{\begin{array}{lc} \{w_\alpha\} & \alpha=\alpha_1,\ldots,\alpha_n\:, \\
X & \alpha\neq\alpha_1,\ldots,\alpha_n\:.\end{array}\right.\]
Thus $U:={\displaystyle\prod_{\alpha\in\Gamma}U_\alpha}\times
{\displaystyle\prod_{\alpha\in\Gamma}V_\alpha}$ is an open neighbourhood of
$((z_\alpha)_{\alpha\in\Gamma},(w_\alpha)_{\alpha\in\Gamma})$, and there exists
bijection $\varphi:\Gamma\to\Gamma$ with 
\[(\sigma_\varphi((x_\alpha)_{\alpha\in\Gamma}),\sigma_\varphi((y_\alpha)_{\alpha\in\Gamma}))=
((x_{\varphi(\alpha)})_{\alpha\in\Gamma},(y_{\varphi(\alpha)})_{\alpha\in\Gamma})\in U\:.\]
Hence $x_{\varphi(\alpha_i)}=z_{\alpha_i}$ and $y_{\varphi(\alpha_i)}=w_{\alpha_i}$
for all $i=1,\ldots,n$. Therefore $x_{\varphi(\alpha_i)}\neq y_{\varphi(\alpha_i)}$
for all $i=1,\ldots,n$, which leads to $\{\varphi(\alpha_1),\ldots,\varphi(\alpha_n)\}
\subseteq \{\alpha\in\Gamma:x_\alpha\neq y_\alpha\}$, so
$n={\rm card}(\{\varphi(\alpha_1),\ldots,\varphi(\alpha_n)\})\leq{\rm card}(
\{\alpha\in\Gamma:x_\alpha\neq y_\alpha\})$ 
(note that $\varphi$ is one to one), which leads to the desired result.
\end{proof}
\begin{lemma}\label{lem50}
For infinite $\Gamma$ we have:
\[L(\mathcal{H},\mathcal{X})\supseteq\{((x_\alpha)_{\alpha\in\Gamma},(y_\alpha)_{\alpha\in\Gamma})
\in \mathcal{X}\times\mathcal{X}:
\{\gamma\in\Gamma:x_\gamma\neq y_\gamma\}{\rm \: is \: finite}\}
\:.\]
\end{lemma}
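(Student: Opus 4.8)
The plan is to unwind the definition of the syndetically proximal relation and reduce everything to the two lemmas just proved. Recall that $L(\mathcal{H},\mathcal{X})=\{(x,y)\in\mathcal{X}\times\mathcal{X}:\overline{\mathcal{H}(x,y)}\subseteq P(\mathcal{H},\mathcal{X})\}$. So I would fix $((x_\alpha)_{\alpha\in\Gamma},(y_\alpha)_{\alpha\in\Gamma})\in\mathcal{X}\times\mathcal{X}$ with $B:=\{\gamma\in\Gamma:x_\gamma\neq y_\gamma\}$ finite, and show that every element of $\overline{\mathcal{H}((x_\alpha)_{\alpha\in\Gamma},(y_\alpha)_{\alpha\in\Gamma})}$ is a proximal pair; this exactly says $((x_\alpha)_{\alpha\in\Gamma},(y_\alpha)_{\alpha\in\Gamma})\in L(\mathcal{H},\mathcal{X})$.

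The key step is to take an arbitrary $((z_\alpha)_{\alpha\in\Gamma},(w_\alpha)_{\alpha\in\Gamma})\in\overline{\mathcal{H}((x_\alpha)_{\alpha\in\Gamma},(y_\alpha)_{\alpha\in\Gamma})}$ and apply Lemma~\ref{lem30}: since $B$ is finite, it follows that $\{\alpha\in\Gamma:z_\alpha\neq w_\alpha\}$ is finite, with cardinality at most ${\rm card}(B)$. Now I invoke the hypothesis that $\Gamma$ is infinite: then $\{\alpha\in\Gamma:z_\alpha= w_\alpha\}$ is cofinite in $\Gamma$, hence infinite, so ${\rm card}(\{\alpha\in\Gamma:z_\alpha\neq w_\alpha\})\leq{\rm card}(\{\alpha\in\Gamma:z_\alpha= w_\alpha\})$. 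By Lemma~\ref{salam} this gives $((z_\alpha)_{\alpha\in\Gamma},(w_\alpha)_{\alpha\in\Gamma})\in P(\mathcal{H},\mathcal{X})$.

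Since $((z_\alpha)_{\alpha\in\Gamma},(w_\alpha)_{\alpha\in\Gamma})$ was an arbitrary point of the orbit closure, I conclude $\overline{\mathcal{H}((x_\alpha)_{\alpha\in\Gamma},(y_\alpha)_{\alpha\in\Gamma})}\subseteq P(\mathcal{H},\mathcal{X})$, which is the desired inclusion. There is no genuine obstacle here beyond correctly chaining the two ingredients: Lemma~\ref{lem30} propagates the ``finitely many disagreements'' condition to the whole orbit closure, and Lemma~\ref{salam} turns that condition into proximality — and infiniteness of $\Gamma$ is precisely what makes the cardinality hypothesis of Lemma~\ref{salam} automatic. (For finite $\Gamma$ the statement is vacuous in the relevant sense, since then $\mathcal{H}$ is finite and $L(\mathcal{H},\mathcal{X})$ reduces to the diagonal, matching the case distinction stated at the start of the section.)
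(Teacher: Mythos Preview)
Your argument is correct and is precisely the one the paper has in mind: its proof is the single line ``Use Lemmas~\ref{lem30} and~\ref{salam},'' and you have faithfully unpacked that chain (Lemma~\ref{lem30} pushes the finite-disagreement condition to every orbit-closure point, then Lemma~\ref{salam} yields proximality because $\Gamma$ is infinite).
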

\begin{proof}
Use Lemmas~\ref{lem30} and \ref{salam}.
\end{proof}
\begin{theorem}
We have:
{\small
\[L(\mathcal{H},\mathcal{X})=\left\{\begin{array}{lc}
\{((x_\alpha)_{\alpha\in\Gamma},(y_\alpha)_{\alpha\in\Gamma})
\in \mathcal{X}\times\mathcal{X}:
\{\gamma\in\Gamma:x_\gamma\neq y_\gamma\}{\rm \: is \: finite}\} & \Gamma{\rm\: is \: infinite\:,} \\
\{(x,x):x\in \mathcal{X}\} & \Gamma{\rm\: is \: finite\:.}\end{array}\right.\]
}
\end{theorem}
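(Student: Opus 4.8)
The plan is to split the theorem into the two cases already singled out in the statement and to assemble the pieces from the lemmas proved above. For finite $\Gamma$ the phase group $\mathcal H$ is finite, so every orbit closure is a finite set and proximality forces equality: a finite group acting on a Hausdorff space has $P(\mathcal H,\mathcal X)=\{(x,x):x\in\mathcal X\}$, and since $L(\mathcal H,\mathcal X)\subseteq P(\mathcal H,\mathcal X)$ (as $\overline{\mathcal H(x,y)}\ni(x,y)$), while conversely every diagonal pair clearly lies in $L$, we get $L(\mathcal H,\mathcal X)=\{(x,x):x\in\mathcal X\}$. This disposes of the bottom line of the displayed formula.

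For infinite $\Gamma$ the two inclusions are exactly Lemma~\ref{lem40} and Lemma~\ref{lem50}. Lemma~\ref{lem40} gives
\[L(\mathcal{H},\mathcal{X})\subseteq\{((x_\alpha)_{\alpha\in\Gamma},(y_\alpha)_{\alpha\in\Gamma})\in\mathcal X\times\mathcal X:\{\gamma\in\Gamma:x_\gamma\neq y_\gamma\}\text{ is finite}\},\]
and Lemma~\ref{lem50} gives the reverse inclusion. So for infinite $\Gamma$ the set $L(\mathcal H,\mathcal X)$ is precisely the collection of pairs differing in only finitely many coordinates, which is the top line of the formula. Combining the two cases yields the theorem.

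There is essentially no obstacle left at this stage: the genuine work has been done in the lemmas. The only points to keep in mind while writing are (i) to state clearly at the outset that one invokes the standard fact that a finite transformation group is distal, so that $P$ reduces to the diagonal, and then note $L\subseteq P$ always and the diagonal is always contained in $L$; and (ii) when citing Lemma~\ref{lem50} to recall that it already packages Lemma~\ref{lem30} (stability of ``finitely many disagreements'' under taking orbit closures) together with Lemma~\ref{salam} (which produces the net of permutations witnessing proximality for any pair whose disagreement set is dominated in cardinality by its agreement set, in particular for pairs differing in finitely many places). If one wanted to be fully self-contained one could inline the short finite-$\Gamma$ argument rather than appeal to distality, but the cleanest write-up just cites the earlier results and the elementary observation about finite group actions.
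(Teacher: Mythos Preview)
Your proposal is correct and follows essentially the same approach as the paper: for infinite $\Gamma$ you invoke Lemmas~\ref{lem40} and~\ref{lem50} for the two inclusions, and for finite $\Gamma$ you use that $P(\mathcal H,\mathcal X)$ is the diagonal together with $L\subseteq P$ and the trivial containment of the diagonal in $L$. The paper's proof is just the terse two-line version of exactly this argument.
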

\begin{proof}
For infinite $\Gamma$ use Lemmas~\ref{lem40} and~\ref{lem50}, also for finite
$\Gamma$ note that $P(\mathcal{H},\mathcal{X})=\{(x,x):x\in \mathcal{X}\}$.
\end{proof}
\section{More details}
\noindent In transformation semigroup $(S,W)$ we say a nonempty subset $D$ of $W$ is
invariant if $SD:=\{sw:s\in S,w\in D\}\subseteq W$. For closed invariant subset $D$ of
$W$ we may consider action of $S$ on $D$ in a natural way. For closed invariant subset $D$ of
$W$ one may verify easily, $P(S,D)\subseteq P(S,W)$, $Q(S,D)\subseteq Q(S,W)$,
and $L(S,D)\subseteq L(S,W)$. 
Suppose $Z$ is a compact Hausdorff topological space with at least two elements,
by Tychonoff's theorem $Z^\Gamma$ is also compact Hausdorff. Again for $\varphi:\Gamma\to\Gamma$
one may consider $\sigma_\varphi:Z^\Gamma\to Z^\Gamma$ ($\sigma_\varphi((z_\alpha)_{\alpha\in\Gamma})=
(z_{\varphi(\alpha)})_{\alpha\in\Gamma}$), also
${\mathcal S}:=\{\sigma_\varphi:Z^\Gamma\to Z^\Gamma|\varphi\in\Gamma^\Gamma\}$, and
${\mathcal H}:=\{\sigma_\varphi:Z^\Gamma\to Z^\Gamma| \varphi\in\Gamma^\Gamma$ and $\varphi:\Gamma\to
    \Gamma$ is bijective~$\}$. 
Then for each finite nonenpty subset $A$ of $Z$, $A^\Gamma$ is a closed invariant subset of $(\mathcal{S},Z^\Gamma)$
(resp.  $(\mathcal{H},Z^\Gamma)$)
and $A$ is a discrete (and finite) subset of $Z$. But using previous sections we know about
$P(T,A^\Gamma)$, $Q(T,A^\Gamma)$, and $L(T,A^\Gamma)$ for $T=\mathcal{H},\mathcal{S}$.
Hence for $T=\mathcal{H},\mathcal{S}$ by:
\[\bigcup\{P(T,A^\Gamma):A{\rm \: \: is \: \: a \: finite \: subset \: of\:} Z\}\subseteq P(T,Z^\Gamma)\:,\]
\[\bigcup\{Q(T,A^\Gamma):A{\rm \: \: is \: \: a \: finite \: subset \: of\:} Z\}\subseteq Q(T,Z^\Gamma)\:,\]
\[\bigcup\{L(T,A^\Gamma):A{\rm \: \: is \: \: a \: finite \: subset \: of\:} Z\}\subseteq L(T,Z^\Gamma)\:,\]
we will have more data about $P(T,Z^\Gamma),Q(T,Z^\Gamma),L(T,Z^\Gamma)$.

{ \small {\bf Fatemah Ayatollah Zadeh Shirazi}, 
Faculty of Mathematics Statistics and Computer Science,
College of Science, University of Tehran,
Enghelab Ave., Tehran, Iran (e-mail: fatemah@khayam.ut.ac.ir)}

{ \small {\bf Amir Fallahpour}, 
Faculty of Mathematics Statistics and Computer Science,
College of Science, University of Tehran,
Enghelab Ave., Tehran, Iran (e-mail: amir.falah90@yahoo.com)}

{ \small {\bf Mohammad Reza Mardanbeigi}, 
Islamic Azad University, Science and Research Branch
Tehran, Iran (e-mail: mrmardanbeigi@srbiau.ac.ir)} 

{ \small {\bf Zahra Nili Ahmadabadi}, 
Islamic Azad University, Science and Research Branch
Tehran, Iran (e-mail: zahra.nili.a@gmail.com)}
\end{document}